\definecolor{refkey}{gray}{.85}
\definecolor{labelkey}{gray}{.85}
\numberwithin{equation}{section}
\begin{document}

\author{Dubi Kelmer}
\thanks{Kelmer is partially supported by NSF grant DMS-1237412.}
\email{kelmer@bc.edu}
\address{Boston College, Boston, MA}
\author{Alex Kontorovich}
\thanks{Kontorovich is partially supported by
an NSF CAREER grant DMS-1254788, an Alfred P. Sloan Research Fellowship, and a Yale Junior Faculty Fellowship.}
\email{alex.kontorovich@yale.edu}
\address{Yale University, New Haven, CT}

\title
{On the Pair Correlation Density for Hyperbolic Angles}

\begin{abstract}
Let $\Gamma< \PSL_2(\R)$ be a lattice and $\omega\in \bH$ a point in the upper half plane. We prove the existence and give an explicit formula for the 
pair correlation density function for the set of angles between geodesic rays of the lattice $\G \omega$ intersected with increasingly large balls centered at $\omega$, thus proving
a conjecture of Boca-Popa-Zaharescu.
\end{abstract}
\date{\today}
\maketitle
\tableofcontents

\section{Introduction}\label{sec:intro}

Let $\G< G:=\PSL_2(\R)$ be a lattice (i.e., a co-finite Fuchsian group) acting by isometries on the upper half plane $\bH$ via fractional linear transformations.
Given a fixed base point $
\gw\in\bH
$
, consider the set of directions of  
 geodesic rays
connecting $\omega$ to 
points $\g \omega$ lying in a
growing
hyperbolic norm ball in $\bH$ (see Figure \ref{fig:1a} for an illustration). 
\begin{figure}
\includegraphics[width=5in]{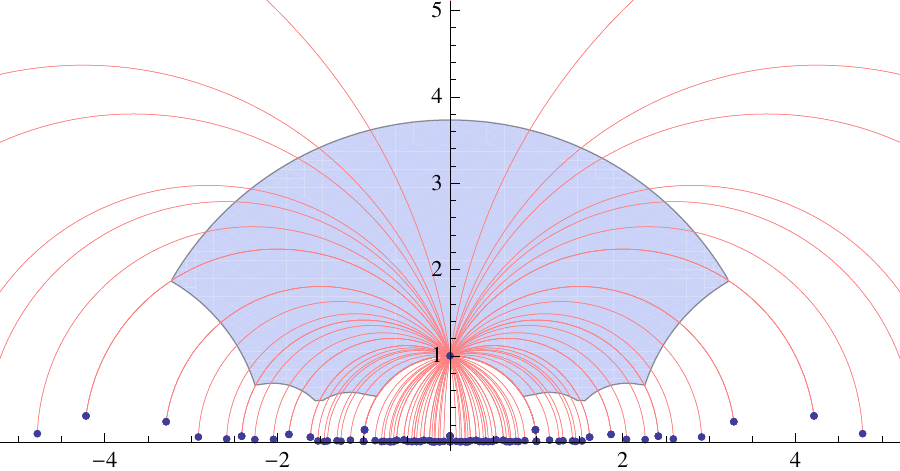}
\caption{%
Geodesic rays connecting $\gw=i$ to points $\g\gw$ for the uniform lattice $\G$ corresponding to the
spin cover of the special orthogonal group preserving the ($\Q$-anisotropic, ternary, indefinite) form $x^{2}+y^{2}-3z^{2}$.
A fundamental domain for $\G$ is shaded.
}
\label{fig:1a}
\end{figure}

It is classical
that these angles become equidistributed in $\R/(2\pi\Z)$
; see, e.g., \cite{
Boca2007,Nicholls1983,GoodBook,RisagerTruelsen2010}.
Going beyond equidistribution, the finer structure of a set of real numbers can be measured by (among other statistics) its
pair correlation, measuring the
distribution of spacings at distances of
mean order. Such spacing statistics have been studied by many authors for many naturally occurring sequences arising in mathematical physics, analysis, and number theory, both experimentally and theoretically. In particular, for the Euclidian analogue of this problem, the pair correlation, as well as other spacing statistics, were studied in \cite{BocaCobeliZaharescu2000,BocaZaharescu2005,BocaZaharescu2006,MarklofStrombergsson2010,ElBazMarklofVinogradov2013}.
It is thus very surprising that the question of spacing statistics for the well-studied set of geodesic ray angles 
in the hyperbolic plane
was considered for the first time only recently  in \cite{BocaPasolPopaZaharescu2012,BocaPopaZaharescu2013}.
To state the 
results, 
 we introduce some notation.

Fix a point $(\gw,\nu)\in T^{1}\bH$ in the unit tangent bundle. For any $g\in G$ with $g\gw\neq\gw$, let
\be\label{eq:angle}
\gt_{g}=\gt_{g}(\gw,\nu) \ \in \ \R/(2\pi\Z)
\ee
be the signed angle between the vector
$\nu$
and the tangent vector at $\gw$ of the directed geodesic connecting $\gw\to g\gw$, see Figure \ref{fig:1}. When $g\gw=\gw$, set $\gt_{g}=[0]$.

\begin{figure}
\includegraphics[width=3in]{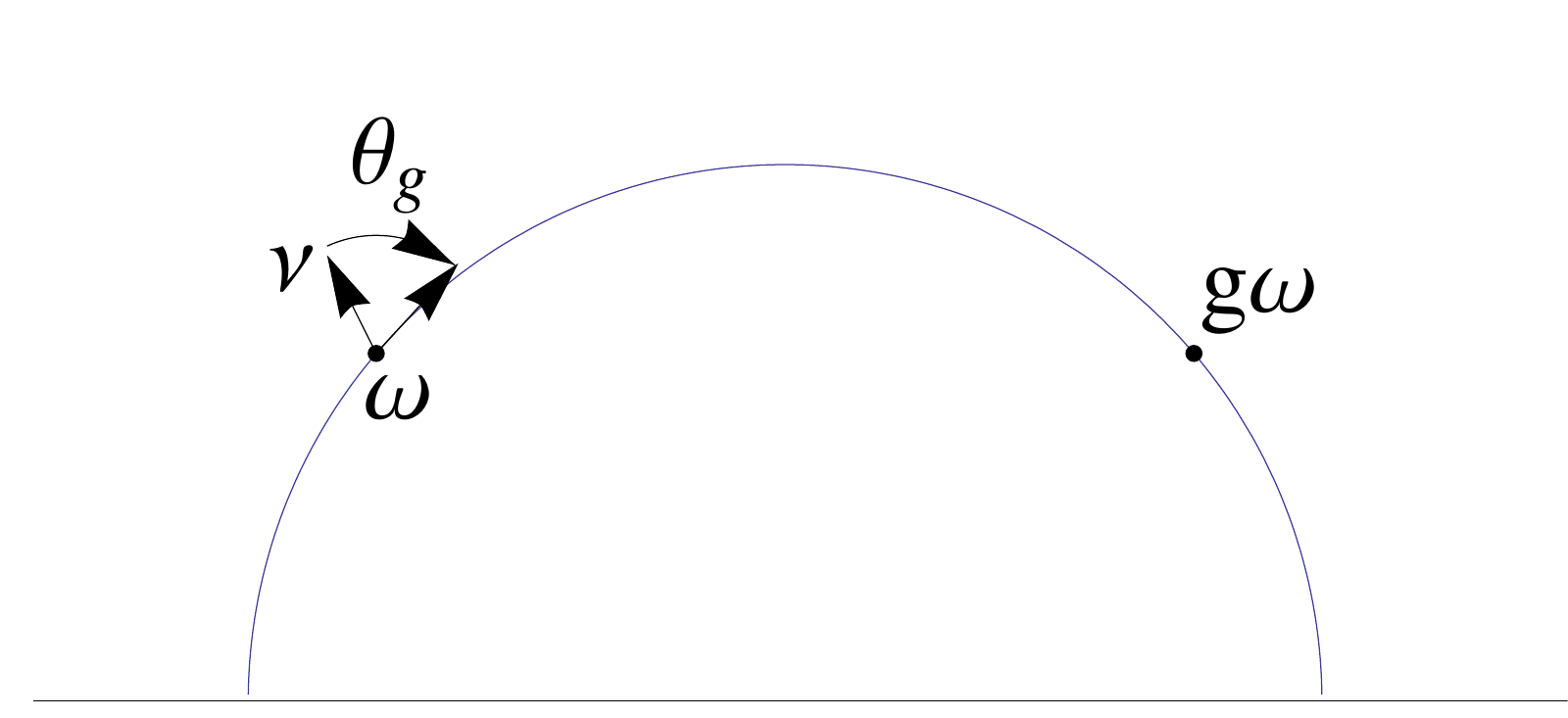}
\caption{
The angle $\gt_{g}$ 
is
 the directed angle between $\nu$ and the tangent vector at $\gw$ of the geodesic connecting $\gw$ to $g \gw$.}
\label{fig:1}
\end{figure}

%

Define
\be\label{eq:gNorm}
\|g\|^{2}:=2\cosh d(\gw,g\gw),
\ee
where $d(\cdot,\cdot)$ is the hyperbolic distance.
 Setting
\be\label{eq:BQdef}
B_{Q}:=
\left\{
g\in G:
\|g\|<Q
\right\}
,
\ee
and normalizing $dg$ so that
\be\label{eq:dgNorm}
\vol(B_{Q})\sim \pi Q^{2},\qquad\qquad
(Q\to\infty)
\ee
it is well-known  
that
\be\label{eq:GcapBQ}
\#\G\cap B_{Q}\sim
{\vol(B_{Q})
\over
V_{\G}
}
\sim
{\pi Q^{2}\over V_{\G}} 
,
\qquad\qquad
(Q\to\infty)
\ee
where
\be\label{eq:VGdef}
V_{\G}:=
\vol(
G/\G)
.
\ee
Hence for $\g\in\G\cap B_{Q}$, the average spacing of the angles $\gt_{\g}$
 is
 $$
 {2\pi}\cdot \frac{ V_\G}{\pi Q^2},
 $$
and we should consider,
 for fixed $\xi>0$ and $Q\to\infty$,  the correlation of pairs of angles 
 by defining
$$
\cN_{Q}(\xi)
:=
\foh
\left|
\left\{
(\g,\g')\in\G^{2}:
\g\gw\neq\g'\gw,\
 \|\g\|,\|\g'\|<Q,\
|\gt_{\g}-\gt_{\g'}|<\tfrac{2 V_\G}{Q^2}\xi
\right\}
\right|
.
$$
The constant $\foh$ in front is to account for the symmetry in $\g,\g'$.
Note that while $\nu$ is needed to define $\gt_{g}$,
the difference
$
|\gt_{\g}-\gt_{\g'}|
,
$
defined as the distance to $2\pi\Z$,
is independent of $\nu$.

The pair correlation distribution function is then defined as 
\be\label{eq:R2def}
R_2(\xi):=\lim_{Q\to\infty}
\frac{V_\G}{\pi Q^2}\cdot\cN_Q(\xi)
,
\ee
if the limit exists. If $R_{2}$ is moreover differentiable, then its derivative defines the pair correlation density,
\be\label{eq:g2Is}
g_{2}(\xi):={d\over d\xi}R_{2}(\xi).
\ee

The difficulty in this setting of determining the pair correlation 
density is illustrated 
in Figure \ref{fig:2a}. 
The main  goal of this paper  is to  explain this picture. %

\begin{figure}
\includegraphics[width=5in]{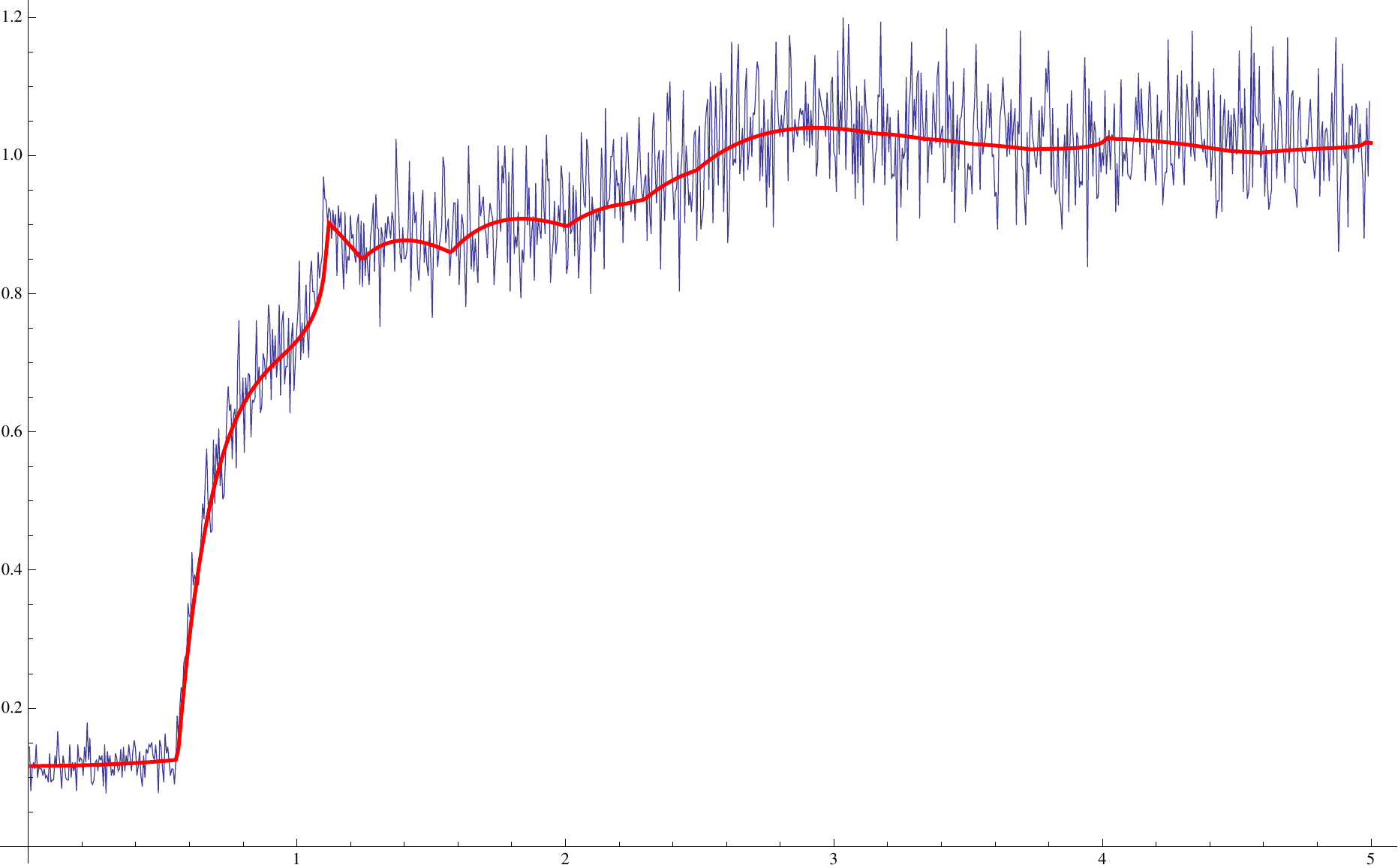}
\caption{The empirical pair correlation density $g_{2}(\xi)$ for $\gw$ and $\G$ as in Figure \ref{fig:1a} (with $Q=2\,000$), plotted against the function given on the right side of \eqref{eq:g2Def}. The fundamental domain in Figure \ref{fig:1a} is a hyperbolic octagon with all right angles, hence $V_{\G}=2\pi$ by Gauss-Bonnet. The cardinality of  $\G\cap B_{Q}$ is $2\, 000\, 914$, which beautifully matches \eqref{eq:GcapBQ}.}
\label{fig:2a}
\end{figure}

%
%
%
%
%
%
%
%

\begin{thm}\label{thm:1}
Let $\G< G$ be any lattice and $\gw\in\bH$ be any fixed base point.
The limit defining the pair correlation function $R_2(\xi)$ in \eqref{eq:R2def} exists, and is moreover differentiable. The density function
defined in \eqref{eq:g2Is} is given by the 
formula
\be\label{eq:g2Def}
g_{2}\left(\frac{\xi}{V_\G}
\right)=\frac{V_\G}{2\pi}\sum_{M\in\G}
f_{\xi}(\ell(M))
.
\ee
Here
\be\label{eq:ellM}
\ell(M):=d(
\gw
,M
\gw
)=\arccosh\left({\|M\|^{2}\over2}\right),
\ee
and, on writing
\be\label{eq:ABCdef}
A=\cosh\ell,\quad
B=\sinh\ell,\quad
C=2\sinh(\ell/2)=\sqrt{2(A-1)}
,
\ee
the function $f_{\xi}$ 
is given by
\be\label{eq:fXiDef}
f_{\xi}(\ell):=
{2
\over  \xi^{2}}
\times
\threecase
{\ell,}
{if $B\le\xi$,}
{
\ell
+
\log(1+\xi^{2})
-
2
\log
\left(A+\sqrt{B^{2} -\xi^{2}}\right)
,}
{if $B\ge\xi\ge C$,}
{
\ell-
\log\left({
A+\sqrt{B^{2} -\xi^{2}}}\right)
,}
{if $C\ge\xi
$.}
\ee

Furthermore, there exists some $\gd>0$, depending only on the spectral gap for $\G$, so that, for fixed $\xi>0$ and $Q\to\infty$, we have
\be\label{eq:cNQrate}
\cN_{Q}(\xi)={\pi Q^{2}\over V_{\G}}
R_2(\xi) + O_{\xi}(Q^{2-\gd}).
\ee
\end{thm}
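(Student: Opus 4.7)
The plan is to parametrize pairs $(\g, \g') \in \G^2$ by the single lattice element $M := \g' \g^{-1} \in \G$, and for each such $M$ to treat the inner count as a constrained hyperbolic circle problem. One writes
\[
\cN_Q(\xi) = \foh \sum_{\substack{M \in \G \\ M\gw \neq \gw}} \cN_Q^M(\xi),\qquad
\cN_Q^M(\xi) := \#\bigl\{\g \in \G : \|\g\|, \|M\g\| < Q,\ |\gt_{M\g} - \gt_\g| < \tfrac{2V_\G \xi}{Q^2}\bigr\}.
\]
Because all three conditions depend on $\g$ only through the point $\g\gw \in \bH$, the count $\cN_Q^M(\xi)$ is really a count of points of the orbit $\G\gw$ in the lens-shaped region
\[
\Omega_Q^M(\xi) := \bigl\{p \in \bH : d(\gw, p), d(\gw, Mp) < R_Q,\ |\angle_\gw(p) - \angle_\gw(Mp)| < \epsilon_Q\bigr\},
\]
where $R_Q := \arccosh(Q^2/2)$ and $\epsilon_Q := 2 V_\G \xi / Q^2$. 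After sandwiching the indicator of $\Omega_Q^M(\xi)$ between smooth upper and lower approximants and applying an effective hyperbolic circle problem on $\G\backslash\bH$ (via spectral expansion against the automorphic Laplacian), one obtains
\[
\cN_Q^M(\xi) = \frac{\operatorname{area}(\Omega_Q^M(\xi))}{V_\G} + \text{error}(M,Q),
\]
with the error governed by the spectral gap of $\G$.

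The main term $\operatorname{area}(\Omega_Q^M(\xi))$ is explicit. Passing to polar coordinates $p = (\alpha, r)$ centered at $\gw$ and writing $(\beta, s)$ for the polar coordinates of $Mp$, the hyperbolic law of cosines
\[
\cosh \ell(M) = \cosh r \cosh s - \sinh r \sinh s \cos(\alpha - \beta)
\]
implicitly expresses $s$ as a function of $(r, \alpha - \beta)$. Integrating the area form $\sinh r \, dr \, d\alpha$ over the region $r, s < R_Q$, $|\alpha - \beta| < \epsilon_Q$, and letting $Q \to \infty$ with $\xi = \epsilon_Q Q^2 / (2V_\G)$ held fixed, the integral collapses to a constant multiple of $Q^2 f_\xi(\ell(M))$. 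The three regimes in \eqref{eq:fXiDef} correspond to where the constant-$\ell$ level curve meets the square $[0, R_Q]^2$ in $(r,s)$-coordinates: for $\xi \geq B$ the angular window is wide enough that every admissible radial pair qualifies; for $C \leq \xi \leq B$ a tangential slice is cut off; for $\xi \leq C$ one further excises a neighborhood of the head-on configuration $s \approx 0$. Summing over $M \in \G$ and dividing by $V_\G$ reproduces the right-hand side of \eqref{eq:g2Def}.

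The principal technical obstacle will be obtaining the global rate \eqref{eq:cNQrate}. Applying the effective circle problem one $M$ at a time yields an error $O(Q^{2-\gd_0})$, but this must be summed over $\asymp Q^2$ many $M$'s---namely those with $\ell(M) \lesssim R_Q$, beyond which $\Omega_Q^M(\xi)$ is empty by the triangle inequality---producing a ruinous total error of size $Q^{4-\gd_0}$. To salvage the claimed rate one must treat the entire double sum as a single automorphic counting problem rather than as $Q^2$ independent ones: for example, smooth at a scale $\eta$, bound the geometric boundary contribution by $O(Q^2 \eta)$ by summing perimeters of the $\Omega_Q^M$, and bound the smoothed spectral contribution en bloc via a single application of Parseval on $\G\backslash\bH$, where the spectral gap delivers a polynomial saving of size $O(\eta^{-c} Q^{2-\gd_0})$. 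Balancing these two losses in $\eta$ then produces the stated $O_\xi(Q^{2-\gd})$ with a smaller---but still positive---value of $\gd$.
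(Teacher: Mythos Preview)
Your decomposition by $M$ and the plan for the volume computation are on the right track and parallel the paper's approach (the paper uses $M=\g^{-1}\g'$ rather than $\g'\g^{-1}$, and computes volumes in $KA^{+}K$ coordinates on $G$ rather than polar coordinates on $\bH$, but these are cosmetic differences).

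The gap is in your handling of the error accumulation. You correctly identify the problem, but your proposed fix---smooth at scale $\eta$, sum perimeters to get a boundary term $O(Q^{2}\eta)$, then apply ``Parseval en bloc'' to the smoothed remainder---does not work as stated. First, the perimeter bound is unjustified: already for a single $M$ with $\ell(M)$ bounded, the region $\Omega_{Q}^{M}$ occupies a positive fraction of the hyperbolic disk of radius $R_{Q}$, whose circumference is $\asymp Q^{2}$, and the angular constraint contributes further boundary pieces of comparable length; summing over the $\asymp T^{2}$ elements with $\|M\|<T$ only makes this worse. Second, the ``single Parseval'' step is circular: the smoothed majorant $\Phi^{+}(p):=\sum_{M}\phi_{M}^{+}(p)$ is, pointwise, itself a lattice point count (namely of those $M\in\G$ carrying $p$ into a fixed thin sector), so controlling its $L^{2}$ or Sobolev norm on $\G\backslash\bH$ demands precisely the uniform off-diagonal input you are trying to prove.

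What the paper actually does is introduce a truncation parameter $T=T(Q)$, a small power of $Q$, and split the $M$-sum at $\|M\|=T$. For $\|M\|<T$ there are only $\asymp T^{2}$ terms, and the individual spectral errors (each $\ll Q^{(17+2\gT)/9}\|M\|^{16/9}$) sum acceptably. For $\|M\|\ge T$ the paper abandons spectral theory altogether and returns to the double sum over $(\g,\g')$: an elementary computation with \eqref{eq:R1} shows that $\|\g^{-1}\g'\|\ge T$ together with the angle constraint forces $\min(\|\g\|,\|\g'\|)\ll Q/T$, after which a direct count (at most $O_{\xi}(1)$ elements of $\G$ in any dyadic thin sector) gives the tail bound $\cE_{Q,T}(\xi)\ll_{\xi}Q^{2}T^{-2}\log Q$. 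Balancing the two errors in $T$ produces the power saving. This tail estimate---treating the far-off-diagonal contribution as pure error rather than trying to extract a main term from it spectrally---is the missing idea.
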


\begin{figure}
        \begin{subfigure}[t]{0.46\textwidth}
                \centering
		\includegraphics[width=\textwidth]{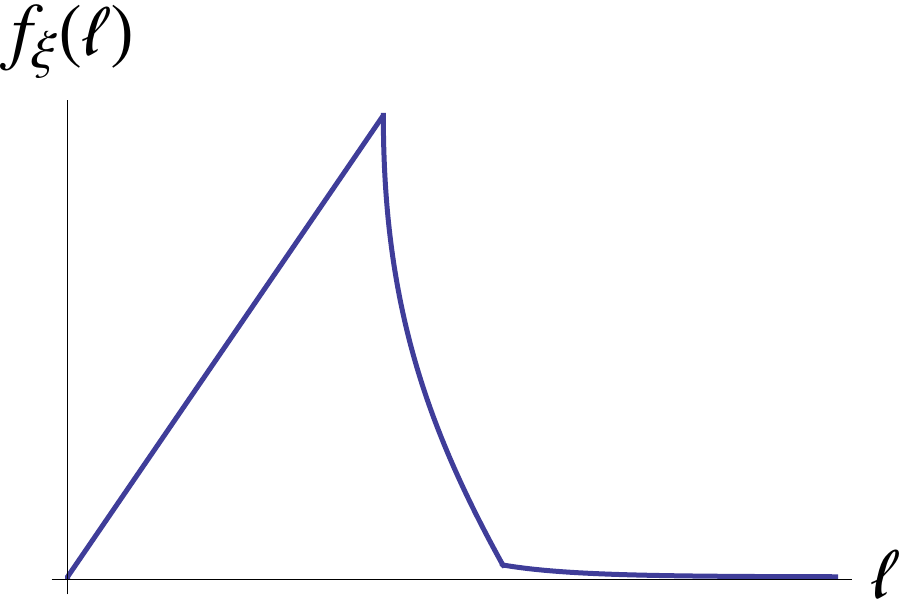}
                \caption{As a function of $\ell$ for $\xi$ fixed.}
                \label{fig:ell}
        \end{subfigure}%
\qquad
        \begin{subfigure}[t]{0.46\textwidth}
                \centering
		\includegraphics[width=\textwidth]{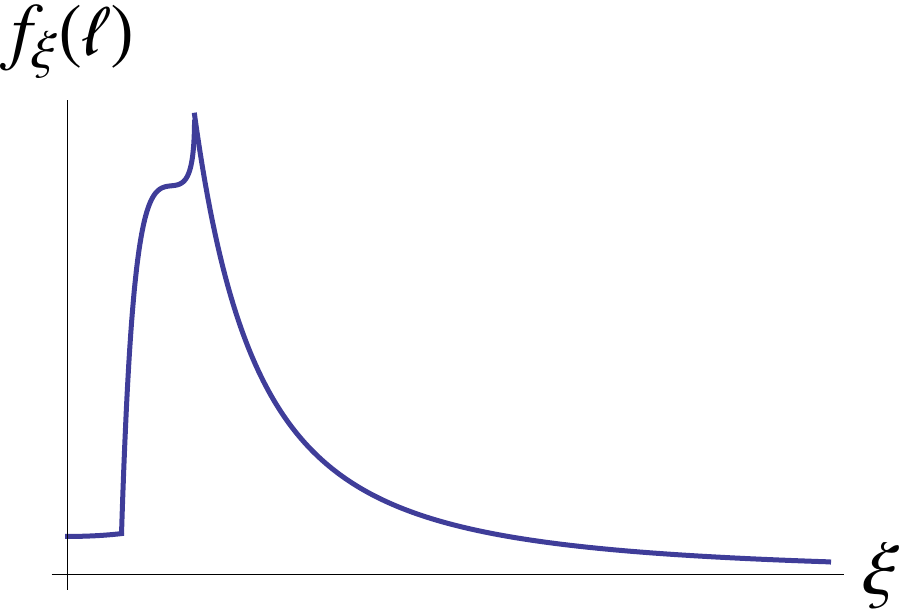}
                \caption{As a function of $\xi$ with $\ell$ fixed.}
                \label{fig:xi}
        \end{subfigure}
\caption{Typical plots of the function $f_{\xi}(\ell)$ in \eqref{eq:fXiDef}.}
\label{fig:2}
\end{figure}

\begin{rmk}
This confirms a conjecture
 due to Boca, Popa, and Zaharescu \cite[Conjecture 1]{BocaPopaZaharescu2013}, who proved  Theorem \ref{thm:1} for the special case $\G=\SL_{2}(\Z)$ and $\gw$ an elliptic point. (They did not  give a rate, though their proof is in principle effective.)
It is remarkable that their proof does not use any spectral theory (and is based instead on repulsion arguments for Farey tessellations and Weil's bound for Kloosterman sums), the downside being
that it does not apply to
general
 lattices $\G$, or base points $\gw$ other than $i$ or $e^{i\pi/3}$. It is not surprising, then, that our approach is completely different from theirs.
\end{rmk}

\begin{rmk}\label{rmk:fxiCont}
Note that
$f_{\xi}(\ell)$ is continuous, see Figure \ref{fig:2} for a plot. 
For given $\xi_{0}>0$,  we 
have
uniformly over $\xi\in(0,\xi_{0}]$
 the bound
\be\label{eq:fxiDecay}
f_{\xi}(\ell)\ll_{\xi_{0}}
\frac{1
}{e^{2\ell}}
,\qquad
\ell\to\infty,
\ee
whence the sum defining \eqref{eq:g2Def} easily converges, since $e^{2\ell(M)}\asymp \|M\|^{4}$.
\end{rmk}

\begin{rmk}
The sum on $M\in\G$ given in \eqref{eq:g2Def} can be reformulated via the trace formula in terms of the spectral expansion of $L^{2}(\G\bk\bH)$, but it does not seem to be expressible in a
more intrinsic way that does not depend so explicitly on either the 
norm
 or Laplace spectrum of $\G$.
This explains the complicated function whose graph is illustrated in Figure \ref{fig:2a}: each $M\in\G$ contributes a peak from $f_{\xi}$ in Figure \ref{fig:xi}  to the sum in \eqref{eq:g2Def}.
\end{rmk}

\begin{rmk}\label{rmk:Jens}
While the limiting pair-correlation is highly non-universal, depending critically on both $\G$ and $\gw$, it does have 
(at least)
one 
trait of universality:
the method of proof of Theorem \ref{thm:1} easily extends to give the following more general statement. Let $\cI\subset\R/(2\pi\Z)$ be any fixed subinterval, and consider the pair-correlation restricted to $\cI$, that is, define
$$
\cN_{Q}^{(\cI)}(\xi)
:=
\foh
\left|
\left\{
(\g,\g')\in\G^{2}:\begin{array}{l}
\g\gw\neq\g'\gw,\
 \gt_{\g},\gt_{\g'}\in\cI,\\
 \|\g\|,\|\g'\|<Q,\
|\gt_{\g}-\gt_{\g'}|<\tfrac{2 V_\G}{Q^2}\xi\end{array}
\right\}
\right|
.
$$
Then,  in analogy with \eqref{eq:R2def}, the limit as $Q\to\infty$ of
$
\dfrac{2\pi}{ |\cI|}
\cdot
\dfrac{V_{\G}}{ \pi Q^{2}}
\cdot
\cN_{Q}^{(\cI)}(\xi)
$
exists, and is also equal to the same function $R_{2}(\xi)$, independently of the choice of $\cI$.
%
This generalization, kindly suggested to us by both Jens Marklof and a referee, 
has the following interpretation: the pair correlation is the same regardless of which part of the ``sky'' we observe from our ``planet'' $\gw$.
\end{rmk}


\begin{rmk}
In \cite{BocaPasolPopaZaharescu2012}, another expression for $g_{2}$ is 
given, again
for the case $\G=\SL_{2}(\Z)$ and $\gw=i$, in terms of lengths
of reciprocal geodesics on the modular surface. More generally, keeping the base point $\gw=i$, if we assume that $\G$ is invariant under transpose and there is another lattice $\G'$ such that the matrices $A=M^tM$ with $M\in \G$ are all the symmetric matrices in $\G'$, then any sum over any function $f(\ell(M))$ with $M\in \G$ can be written as a sum over $f(\ell(C)/2)$, where $C$ runs over closed geodesics in $\G'\bk \bH$ passing through $i$. Explicitly:
$$
\sum_{M\in \G}f(\ell(M))=|\G_\gw|\sum_C f(\ell(C)/2)
.
$$
Here $\G_{\gw}$ is the subgroup of $\G$ which stabilizes $\gw$.
Note that on the right we divide $\ell$ by 2, which compensates for the fact that the sum is over a much smaller set.
For a different base point $\omega$, the same is true but the requirement that $\G$ be transpose-invariant is replaced by a different involution corresponding to $\omega$.
\end{rmk}

\begin{rmk}
We make no attempt to optimize the rate in \eqref{eq:cNQrate}, as can surely be done with some effort. Our point is simply that the method is completely effective, with a power gain.
The value of $\gd$ coming from our proof is given as follows.
Let
$$
\gT
\in
(
0,\tfrac12
)
$$
be a spectral gap for $\G$, that is, a number so that the first non-zero eigenvalue $\gl_{1}$ of the hyperbolic Laplacian on $L^{2}(\G\bk\bH)$ satisfies
\be\label{eq:gl1}
\gl_{1}>\frac14-\gT^{2}.
\ee
(If $\G$ is arithmetic, then $\gT=7/64$ is known \cite{KimSarnak2003}.)
Then \eqref{eq:cNQrate} holds with any
\be\label{eq:cNQrateIs}
\gd<(1-2\gT)/26.
\ee
\end{rmk}

\begin{rmk}
It is interesting to compare our result to analogous results in the Euclidian setting. In this case one fixes a Euclidian lattice $\Lambda\subseteq \R^2$ and studies the distribution of angles between line segments connecting the origin (or a different point $\alpha\in \R^2/\Lambda$) to lattice points contained in increasing domains of $\R^2$. Here the angles become equidistributed on the circle (independently on the choice of $\alpha$) but the fine scale statistics depend on the choice of $\alpha$. In \cite{ElBazMarklofVinogradov2013}, for $\alpha$ satisfying certain diophantine properties the pair correlation was shown to be that of a Poisson process, in agreement with the average pair correlation previously computed in \cite{BocaZaharescu2006}. On the other hand, for $\alpha=0$, it is natural to consider primitive vectors in $\Z^2$, in which case the pair correlation density was explicitly computed in \cite{BocaZaharescu2005} and is far from  Poisson.
\end{rmk}

\begin{rmk}
Returning to
hyperbolic space, one can formulate an alternate version of the problem,
similar to the Euclidean setting
 above 
 with
 $\alpha\neq 0$.
Fixing {\it two}
 base points $\gw_{1}$ and  $\gw_{2}$, and $g\in G$,
consider
the angle
$\theta_g(\omega_1,\omega_2)$ 
 between some fixed direction $\nu$ and the tangent vector at $\omega_1$ of the geodesic ray connecting $\omega_1$ to $g\omega_2$. The distribution of the angles $\theta_{\g}(\omega_1,\omega_2)$ with $\g\in \G$ was studied in \cite{Boca2007} when the angles are ordered by $d(\omega_1,\g\omega_1)$, and again in \cite{RisagerTruelsen2010} when ordered by $d(\omega_1,\g\omega_2)$. In the second ordering these angles become equidistributed with respect to Lebesgue measure (in fact, by conjugating the lattice this is reduced to the case of $\omega_1=\omega_2$). However, in the first ordering they become equidistributed with respect to a different measure, $\rho_{\omega_1,\omega_2}(\theta)d\theta$, depending on the base points. It would thus be interesting to study the pair correlation
also for the first
ordering. (Note that when the angles themselves are not uniformly distributed, one must ``renormalize'' the pair correlation function to have mean spacing one everywhere.)
\end{rmk}

It is 
interesting
to examine the boundary behavior of the pair correlation density function $g_{2}(\xi)$. For $\xi\to0$, it follows immediately from \eqref{eq:g2Def} and l'Hopital's rule that
$$
g_{2}(0)={V_{\G}\over \pi}\sum_{M\in\G\atop\ell(M)>0}{1\over e^{2\ell(M)}-1},
$$
as observed in \cite[(1.3)]{BocaPopaZaharescu2013}. In particular, $g_2(0)>0$ which is in contrast to the result in the Euclidian setting studied in \cite{BocaZaharescu2005}, where the pair correlation vanishes near zero.
For the other extreme, $\xi\to\infty$, in many natural settings,
the pair correlation density function approaches $1$
;
see Figure \ref{fig:2a}.
We confirm the conjecture in \cite[(1.4)]{BocaPopaZaharescu2013} that this case is no different.
\begin{thm}\label{thm:2}
As $\xi\to\infty$,
$$
g_{2}\left({\xi
}
\right)
=
1+O\left(\frac{1}{\xi^{(1-2\gT)/3}}\right),
$$
where $\gT\in(0,\foh)$ is a spectral gap for $\G$.
\end{thm}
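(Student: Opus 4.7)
The starting point is Theorem~\ref{thm:1}: setting $\eta := \xi V_\G$, we have
\[
g_2(\xi) = \frac{V_\G}{2\pi}\sum_{M\in\G} f_\eta(\ell(M)),
\]
and we must show this equals $1+O(\eta^{-(1-2\gT)/3})$ as $\eta\to\infty$. My plan is to express the sum as a Stieltjes integral against the hyperbolic lattice point counting function $N(L):=\#\{M\in\G:\ell(M)\le L\}$, invoke a Selberg-type lattice point theorem with spectral-gap error
\[
N(L) = \frac{2\pi}{V_\G}(\cosh L-1) + E(L),\qquad |E(L)|\ll e^{(1/2+\gT)L},
\]
and decompose
\[
g_2(\xi) \;=\; \int_0^\infty f_\eta(\ell)\sinh\ell\,d\ell \;+\; \frac{V_\G}{2\pi}\int_0^\infty f_\eta(\ell)\,dE(\ell).
\]

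\textbf{Main integral.} For the first integral I would split the range of $\ell$ according to the three cases of \eqref{eq:fXiDef}. The key simplification in cases~2 and 3 comes from the substitution $\cosh\ell=\sqrt{1+\eta^2}\cosh\phi$, under which
\[
A+\sqrt{B^{2}-\eta^{2}} \;=\; \sqrt{1+\eta^{2}}\,e^{\phi},
\]
so the bracketed integrands in \eqref{eq:fXiDef} collapse to $\ell-2\phi$ and $\ell-\tfrac12\log(1+\eta^{2})-\phi$, respectively. Elementary evaluation then shows that the intermediate range $C\le\eta\le B$ (equivalently $\sinh^{-1}\eta\le\ell\le 2\sinh^{-1}(\eta/2)$) contributes the leading constant $1$, while the outer ranges contribute only $O(\log\eta/\eta)$. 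Hence
\[
\int_0^\infty f_\eta(\ell)\sinh\ell\,d\ell \;=\; 1+O(1/\eta).
\]

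\textbf{Error integral and main obstacle.} For the remaining integral I would integrate by parts. The boundary term at infinity vanishes because the uniform decay $f_\eta(\ell)\ll e^{-2\ell}$ for $\ell$ past the range-3 threshold $\gtrsim 2\log\eta$, obtained by Taylor-expanding case~3 of \eqref{eq:fXiDef}, dominates the growth of $E(\ell)$. This reduces matters to estimating $\int|f_\eta'(\ell)|\,|E(\ell)|\,d\ell$. The chief difficulty is that $f_\eta$ is only piecewise $C^{1}$: $f_\eta'$ has an integrable $(B-\eta)^{-1/2}$ blow-up from the range-2 side of $B=\eta$ and a jump discontinuity at $C=\eta$. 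Computing $f_\eta'$ explicitly in each range and inserting the weight $e^{(1/2+\gT)\ell}$, I would truncate the analysis at a parameter $\ell=T$, control the tail $\sum_{\ell(M)>T}f_\eta(\ell(M))\ll e^{-T}$ via the uniform decay, and balance this against the spectrally bounded contribution on $[0,T]$. Optimizing $T$ across these three competing scales (tail size, spectral error in the bulk, and the contribution of $f_\eta'$ near the range boundaries) is the source of the exponent $1/3$ in the final rate $\eta^{-(1-2\gT)/3}$.
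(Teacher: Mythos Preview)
Your route is genuinely different from the paper's, and the core idea---write the lattice sum as a Stieltjes integral against $N(L)$, integrate by parts, and plug in a lattice point remainder---is sound. But two things are off.

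First, the pointwise bound $|E(L)|\ll e^{(1/2+\gT)L}$ is not what the hyperbolic lattice point theorem gives. After subtracting the main term, the remainder consists of the exceptional eigenvalue contributions $c_{j}e^{s_{j}L}$ with $s_{j}<\tfrac12+\gT$ \emph{plus} the Selberg error $O(e^{2L/3})$ coming from the tempered spectrum; the latter does not improve with the spectral gap. So the honest bound is $|E(L)|\ll e^{\max(1/2+\gT,\,2/3)L}$, and for $\gT<1/6$ your stated estimate is stronger than anything known. This is easy to repair, but you should state it correctly.

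Second, your account of where the exponent $1/3$ comes from does not match your own method. Once you integrate by parts, there is no truncation parameter to optimize: using $|f_{\eta}'(\ell)|\ll \eta^{-2}$ on $(\ell_{1},\ell_{2})$ (with the integrable $(\ell-\ell_{1})^{-1/2}$ blow-up near $\ell_{1}$) and $|f_{\eta}'(\ell)|\ll e^{-2\ell}$ for $\ell>\ell_{2}\sim 2\log\eta$, the integral $\int |f_{\eta}'(\ell)|\,|E(\ell)|\,d\ell$ is dominated by the range near $\ell_{2}$ and comes out as $O(\eta^{-1+2\gT})$ (or $O(\eta^{-2/3})$ if the Selberg term dominates). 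Either way this is \emph{stronger} than $\eta^{-(1-2\gT)/3}$, so no three-scale balancing is needed and the ``source of the $1/3$'' paragraph should be dropped.

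For comparison, the paper does not invoke a lattice point count at all. It recognizes $g_{2}(\xi/V_{\G})$ as $\tfrac{V_{\G}}{2\pi}K_{\xi}(1,1)$ for the automorphic kernel $K_{\xi}(g,h)=\sum_{M}f_{\xi}(\ell(gMh^{-1}))$, smooths by pairing with $\Psi\otimes\Psi$ for a spherical bump $\Psi$ at scale $\gd$, unfolds, and applies the matrix coefficient decay $\langle\pi(g)\Psi^{\perp},\Psi^{\perp}\rangle\ll\gd^{-2}\|g\|^{-1+2\gT}$ together with Lemma~\ref{lem:fxiInt}. The smoothing error (Lemma~\ref{lem:fxivar}) is $O(\gd)$ and the spectral error is $O(\gd^{-2}\xi^{-1+2\gT})$; the exponent $1/3$ in the paper comes from balancing these in $\gd$. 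Your approach sidesteps the smoothing entirely by exploiting that $f_{\eta}$ is already absolutely continuous with integrable derivative, at the cost of importing the lattice point theorem as a black box.
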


\subsection{Outline}\

The method of proof, and the  rest of the paper, proceed as follows.
Following Boca-Pasol-Popa-Zaharescu \cite{BocaPasolPopaZaharescu2012}, we first replace $\g'$ in $\cN_{Q}(\xi)$ by the
variable $M=\g^{-1}\g'$,
to measure  
how far
$(\g,\g')$ is ``off-diagonal''.
Let the stabilizer of $\gw$ be denoted by
$$K_{\gw}=\Stab_{G}(\gw)\cong
\mathrm{PSO}(2)
,
$$
which is a
maximal compact subgroup of $G$.

Then  switching to the more convenient variable, $\tfrac{\xi}{V_\G}$, we may write
\beann
\cN_{Q}(\tfrac{\xi}{V_\G})
&=&
\frac{
1}{2}
\sum_{{M\in
\G
}
\atop M
\not\in
K
}
\left|
\left\{
\g\in\G:
\|\g\|,\|\g M\|<Q,\,
|\gt_{\g}-\gt_{\g M}|<{2\xi\over Q^2}
\right\}
\right|
.
\eeann
After conjugating $\G$, we may 
assume
  that $(\gw,\nu)=(i,\uparrow)$
  , dropping all subscript $\gw$'s.

For each $M$, let
\be\label{eq:RMdef}
\cR_{M}(Q,\xi):=
\left\{
g\in G
:
\|g\|,\|g M\|<Q,\,
|\gt_{g}-\gt_{g M}|<{2\xi\over Q^2}
\right\}
,
\ee
be the region of interest,
so that
$$
\cN_{Q}(\tfrac{\xi}{V_\G})
=\frac{1}{2}\sum_{M\in\G\atop M\notin K}\#\G\cap\cR_{M}(Q,\xi)
.
$$

One can hope that
$$
\#\G\cap\cR_{M}(Q,\xi)\sim \frac{\vol(\cR_{M}(Q,\xi))}{
V_{\G}
}
$$ 
can be proved using automorphic tools (spectral theory and dynamics). This is indeed the case for $\|M\|$ small,
 but for larger $M$, this volume can be of such small size that spectral methods are hopeless; the error term dominates the volume (see Proposition \ref{prop:cM} below).
Instead, once we are far enough off-diagonal, the entire contribution should be
treated as
a remainder.

To this end, we introduce another parameter $T=T(Q)\to\infty$, and break $\cN_{Q}$ into  ``main'' and ``error'' terms according to whether or not $\|M\|<T$, writing
$$
\cN_{Q}(\tfrac{\xi}{V_\G})=\frac{1}{2}\sum_{\|M\|
<
T\atop M\notin K}\#\G\cap\cR_{M}(Q,\xi)+\cE_{Q,T}(\xi),
$$
say.

After a few preliminary computations in \S\ref{sec:prelim},
we turn our attention
in
 \S\ref{sec:vol}
 to individual $M$'s with $\|M\|<T$, 
 analyzing the
volumes of $\cR_{M}(Q,\xi)$. These are  the two most technically challenging sections, and we use \S\ref{sec:vol} to prove the following
\begin{prop}\label{prop:vol}
As $Q\to \infty$, we have that
$$
\vol(\cR_{M}(Q,\xi))={Q^{2}}
\int_{0}^{\xi}f_{\gz}(\ell(M))d\gz
+ O_{\xi}(\|M\|^{2}Q^{2/3})
,
$$
where $\ell(M)$ is defined in \eqref{eq:ellM}, and $f_{\gz}(\ell)$ is given in \eqref{eq:fXiDef}.
\end{prop}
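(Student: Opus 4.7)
The plan is to parametrize $G = \PSL_2(\R)$ via Cartan coordinates $g = k_\phi a_r k_\psi$, with $K = \Stab_G(i)$ and Haar measure proportional to $\sinh r\,dr\,d\phi\,d\psi$. Since left-multiplication by $k_\phi$ rotates every tangent direction at $i$ by the same constant, all three conditions defining $\cR_M(Q,\xi)$ are independent of $\phi$: $\|g\|^2 = 2\cosh r$ depends only on $r$, while both $\|gM\|$ and the angular difference $|\gt_g - \gt_{gM}|$ depend only on $(r,\psi)$. The $\phi$-integral therefore yields a trivial constant factor, reducing the problem to a two-dimensional integral in $(r,\psi)$.

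The next step is to view $r$, $r' := d(i,gMi)$ and $\ell = \ell(M)$ as the three sides of the hyperbolic triangle with vertices $i,\, p := gi,\, q := gMi$, and $\beta := \gt_g - \gt_{gM}$ as its angle at $i$. As $\psi$ varies, the point $gMi$ traces the hyperbolic circle of radius $\ell$ around $p$, so (up to a convention-dependent constant) $d\psi$ is proportional to the angular arc parameter $d\alpha$ on that circle, namely the angle at $p$ in the triangle. The hyperbolic law of cosines $\cosh\ell = \cosh r\cosh r' - \sinh r\sinh r'\cos\beta$ together with the factorization
$$
(\sinh r\,\sinh r'\,\sin\beta)^2 \;=\; \bigl(\cosh\ell - \cosh(r-r')\bigr)\bigl(\cosh(r+r') - \cosh\ell\bigr)
$$
then permits a change of variable from $\alpha$ to $r'$ via $d\alpha = dr'/(\sinh r\,|\sin\beta|)$. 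The constraint $|\beta| < 2\xi/Q^2$ forces the triangle to be nearly degenerate, concentrating $r'$ near the two values $r' = r\pm\ell$ (the second requiring $r > \ell$). On each branch the $dr'$-integrand becomes an elementary $1/\sqrt{s}$-type expression in the deviation $s := |r' - (r\pm\ell)|$, integrable in closed form.

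Integrating over $r \in (0, r_Q)$ with $r_Q := \arccosh(Q^2/2)$, one obtains the main term $Q^2\int_0^\xi f_\zeta(\ell)\,d\zeta$. The three-case structure of $f_\xi$ in \eqref{eq:fXiDef} emerges from tracking two independent cutoffs: (a) the radial constraint $r' < r_Q$ truncates the ``far'' branch $r' \approx r + \ell$ in a boundary layer $r \in (r_Q - \ell,\,r_Q)$, producing the $\log(A + \sqrt{B^2 - \xi^2})$ terms; and (b) for $\xi \ge B$, the maximum-angle identity $\sin\beta_{\max}(r) = \sinh\ell/\sinh r$ falls below $2\xi/Q^2$ once $\sinh r > BQ^2/(2\xi)$, rendering the angular constraint vacuous and contributing an unrestricted ``full-circle'' piece. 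The intermediate threshold $\xi = C$ is the algebraic boundary at which these two effects exchange dominance in the boundary layer (a), producing the transition between the piecewise expressions; elementary simplification in each regime then matches the piecewise definition of $f_\zeta$.

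The main obstacle is the uniform error analysis. One must replace $\sinh r$ by $e^r/2$, expand $r'(\alpha) \approx (r\pm\ell) - c_\ell\beta^2 e^{\pm 2r}$ to quadratic order in $\beta$, and bound the correction from the boundary layer around $r = r_Q - \ell$, to obtain the stated $O_\xi(\|M\|^2 Q^{2/3})$. The $\|M\|^2 \asymp e^\ell$ tracks the $\ell$-dependence of the implied constants, and the exponent $2/3$ is obtained by balancing the width of an intermediate $r$-region --- too close to the boundary layer for the leading Taylor expansion alone, but still in the asymptotic regime of $\sinh r$ --- against the accumulated approximation errors on either side.
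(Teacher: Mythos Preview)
Your approach is correct in outline and genuinely different from the paper's. Both begin with Cartan coordinates and the left-$K$-invariance, but diverge in the choice of working coordinates. The paper keeps the pair $(t,\varphi)$ (equivalently your $(r,\alpha)$), rescales to $(x,y)=(2\cosh t/Q^2,\cos\varphi)$, and expresses the region as an explicit interval $J_\xi(y)\subset[0,1]$ whose length is computed in closed form; the main term is then $Q^2\int_{-1}^1|J_\xi(y)|\,dy/\sqrt{1-y^2}$, and the derivative of this in $\xi$ is computed case-by-case using the explicit endpoints of $J_\xi$ (roots of a quadratic and of $B\sqrt{1-y^2}=\xi$). Your route instead passes from $\alpha$ to $r'=d(i,gMi)$ via the Jacobian $d\alpha=dr'/(\sinh r\,|\sin\beta|)$ --- which is correct, and the factorization you quote is exactly right --- yielding $1/\sqrt{s}$-type integrals near the degenerate configurations $r'=r\pm\ell$. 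The paper's coordinates make the region a simple interval and the error bookkeeping mechanical; your coordinates make the geometric source of the three-case structure more transparent, at the cost of a less explicit error analysis.

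The one place where your proposal is too thin to stand as a proof is the error term. In the paper the $O_\xi(\|M\|^2Q^{2/3})$ arises from a specific truncation: one restricts to $\|g\|^2>X$, losing $O(X)$ in volume; on this truncated region the quantity $z=\sqrt{1-4/(Q^4x^2)}$ satisfies $1-z\le 4/X^2$, and replacing $z$ by $1$ in the two defining inequalities costs $O(\|M\|^6/X^2)$ and $O(\|M\|^6/(\xi X^2))$ respectively in the endpoint of $J_\xi$. Balancing $X$ against $Q^2\cdot\|M\|^6/X^2$ gives $X=\|M\|^2Q^{2/3}$. Your description (``replace $\sinh r$ by $e^r/2$, expand $r'$ to quadratic order in $\beta$, balance an intermediate $r$-region'') gestures at the same mechanism but does not name the truncation parameter, identify the two competing errors, or verify that the $\|M\|$-dependence comes out as $\|M\|^2$ rather than a higher power. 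To make your argument complete you would need to carry out that balancing explicitly --- in your coordinates the analogue is to truncate $r$ away from $0$ and track how the approximations $\cosh r\approx e^r/2$ and $\tan\beta\approx\beta$ degrade as $r$ decreases.
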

The proposition is proved by a direct and delicate analysis of the region in $G$ corresponding to $\cR_{M}$. The estimate is evidently non-trivial only when $\|M\|=o(Q^{2/3})$.

In section \S\ref{sec:cM}, equipped with our understanding of these volumes, we prove 
 the following
\begin{prop}\label{prop:cM}
Recalling from \eqref{eq:gl1} that $\gT\in(0,\foh)$ is a spectral gap for $\G$, we have
$$
\#\G\cap\cR_{M}(Q,\xi)= \frac{\vol(\cR_{M}(Q,\xi))}{V_{\G}}
+
O_{\xi}(
Q^{(17+2\gT)/9}
\|M\|^{16/9}
)
,
$$
as $Q\to\infty$.
\end{prop}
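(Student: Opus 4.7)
The plan is to use an effective lattice-point count in $\Gamma\backslash G$, combining the standard smoothing sandwich with the spectral theory of $L^{2}(\Gamma\backslash G)$. Fix a scale parameter $\eta>0$, to be optimized later, and let $\psi_{\eta}$ be a smooth, non-negative, $L^{1}$-normalized bump on $G$ supported in an $\eta$-ball around the identity. Its $\Gamma$-periodization
$$\Psi_{\eta}(g)\;:=\;\sum_{\gamma\in\Gamma}\psi_{\eta}(\gamma^{-1}g)$$
is an element of $L^{2}(\Gamma\backslash G)$ of total mass $1$. Denoting by $\cR_{M}^{\pm\eta}$ the inner and outer $\eta$-neighborhoods of $\cR_{M}(Q,\xi)$ in $G$ (for right-translation), a standard unfolding yields the sandwich
$$\int_{G}\mathbf{1}_{\cR_{M}^{-\eta}}(g)\,\Psi_{\eta}(g)\,dg\;\leq\;\#\Gamma\cap\cR_{M}(Q,\xi)\;\leq\;\int_{G}\mathbf{1}_{\cR_{M}^{+\eta}}(g)\,\Psi_{\eta}(g)\,dg.$$

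Next I would decompose $\Psi_{\eta}=\tfrac{1}{V_{\Gamma}}+\Psi_{\eta}^{\perp}$ into its constant and orthogonal parts. The constant part contributes $\vol(\cR_{M}^{\pm\eta})/V_{\Gamma}$, which agrees with the target $\vol(\cR_{M})/V_{\Gamma}$ up to the volume of the $\eta$-boundary-shell of $\cR_{M}$. For the orthogonal part, I would expand $\Psi_{\eta}^{\perp}$ spectrally over the discrete and continuous spectrum of $\Gamma\backslash G$ and invoke the spectral gap \eqref{eq:gl1} through effective decay of matrix coefficients of the regular representation of $G$ (equivalently, effective mixing of the frame flow on $\Gamma\backslash G$). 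Paired against $\mathbf{1}_{\cR_{M}^{\pm\eta}}$---whose spectral components can be estimated from the volume bound in Proposition \ref{prop:vol} together with standard Sobolev bounds on $\psi_{\eta}$, the latter contributing a loss $\eta^{-\alpha}$ for a fixed $\alpha$---this yields a spectral error of rough form $Q^{b+2\Theta}\,\eta^{-\alpha}$.

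The main obstacle is the boundary-shell estimate. The region $\cR_{M}$ has three boundary pieces, coming from $\|g\|=Q$, $\|gM\|=Q$, and $|\theta_{g}-\theta_{gM}|=2\xi/Q^{2}$; these sit at different scales, and the last interacts non-trivially with $M$ via the $KAK$-type decomposition used in \S\ref{sec:vol}. A refinement of the geometric analysis carried out there is needed to show that the volume of the $\eta$-thickening of $\partial\cR_{M}$ is bounded by $\eta$ times an explicit polynomial in $Q$ and $\|M\|$, uniformly in $M$; this is where the bulk of the technical work lives, and in particular it is the $\|M\|$-dependence on the angular boundary that forces the appearance of the exponent $16/9$ in the final estimate. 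Balancing this boundary error against the spectral error $Q^{b+2\Theta}\eta^{-\alpha}$ and optimizing $\eta$ as a function of $Q$ and $\|M\|$ then produces the stated exponents $(17+2\Theta)/9$ in $Q$ and $16/9$ in $\|M\|$. The remaining ingredients---matrix-coefficient decay, Sobolev estimates for $\psi_{\eta}$, and the geometric boundary analysis---are each standard in isolation, but the three-scale nature of $\cR_{M}$ makes careful uniform bookkeeping in $M$ essential.
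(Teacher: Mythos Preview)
Your overall strategy---smoothing sandwich, decomposition into constant plus orthogonal, matrix-coefficient decay via the spectral gap, then optimization of the smoothing scale---is exactly the paper's. But there is a genuine gap in your plan for the boundary-shell estimate, and it is precisely the place where the paper does something you have not anticipated.

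The region $\cR_{M}(Q,\xi)$ is \emph{not} well-rounded in the sense you assume. The sensitivity of the angular constraint $|\theta_{g}-\theta_{gM}|<2\xi/Q^{2}$ to a perturbation of $g$ is of order $\delta\|M\|^{4}/\|g\|^{2}$ (Lemma~\ref{lem:pert2}), which for small $\|g\|$ is much larger than the constraint itself. So the volume of the $\eta$-shell of $\partial\cR_{M}$ is \emph{not} bounded by $\eta$ times a polynomial in $Q,\|M\|$ uniformly; near small $\|g\|$ the shell swallows the region. The paper fixes this by introducing a second parameter $X$ and working with the truncated region $\cR_{M}(Q,\xi;X)=\{g\in\cR_{M}:\|g\|>Q/X\}$, whose complement has volume $O(Q^{2}/X^{2})$. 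Only on this truncation does the well-roundedness Lemma~\ref{lem:round} hold, and the final exponents $(17+2\Theta)/9$ and $16/9$ emerge from a \emph{two}-parameter optimization (first $\delta$, then $X$), not the single-parameter one you outline.

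A second point, more of method than of gap: the paper does not estimate the boundary shell directly. Instead it shows (Lemma~\ref{lem:round}) that the $\delta$-thickening of $\cR_{M}(Q,\xi;X)$ sits inside $\cR_{M}$ at perturbed parameters $\bigl(Q(1+c\delta\|M\|^{2}),\,\xi(1+c\delta X^{2}\|M\|^{4});\,2X\bigr)$, and then invokes the volume formula of Proposition~\ref{prop:vol} at those perturbed parameters to bound the difference. This reuses the hard geometric work already done, rather than redoing it; your proposed ``refinement of the geometric analysis'' would amount to reproving much of \S\ref{sec:vol}. Finally, the paper smooths on both sides (spherical $\psi_{1}$ on the left, non-spherical $\psi_{2}$ on the right in $D_{\delta}$), which is what makes the unfolding in \eqref{eq:unfold} land directly on a matrix coefficient; with only right-smoothing you would need to periodize the indicator of $\cR_{M}$ and control its spectral expansion, which is harder.
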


We are 
quite
crude here in the estimation of the error, but it more than suffices for our applications, so we do not pursue the issue. The idea of the proof is a more-or-less standard smoothing and un-smoothing argument, though the execution of the method requires a bit of care.

Next we spend \S\ref{sec:cE}
disposing of the error, by proving the following
\begin{prop}\label{prop:cE}
For fixed $\xi>0$ and $T<Q,$
$$
\cE_{Q,T}(\xi)
\ll_{\xi}
Q^{2} \cdot\frac {\log Q}{T^{2}}
,
$$
as $T,Q\to\infty$.
\end{prop}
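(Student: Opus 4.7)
The plan is first to rewrite $\cE_{Q,T}(\xi)$ as a direct count over ordered pairs $(\g,\g')\in\G^2$. Substituting $\g'=\g M$ and using the identity $\|g\|^2=2\cosh d(\gw,g\gw)$ to translate $\|M\|\ge T$ into a mutual-distance condition gives
$$\cE_{Q,T}(\xi) = \tfrac12\#\Bigl\{(\g,\g')\in\G^2 : \g\gw\neq\g'\gw,\ \|\g\|,\|\g'\|<Q,\ |\gt_\g-\gt_{\g'}|<\tfrac{2\xi}{Q^2},\ d(\g\gw,\g'\gw)\ge D_T\Bigr\},$$
where $D_T:=\arccosh(T^2/2)=2\log T+O(1)$. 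Writing $r_h:=d(\gw,h\gw)$ and using the $\g\leftrightarrow\g'$ symmetry, we may assume $r_\g\le r_{\g'}$ at the cost of a factor of $2$.

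The geometric crux is a pigeonhole: small angular separation combined with large mutual distance forces a large radial gap. The hyperbolic law of cosines gives
$$\cosh d(\g\gw,\g'\gw) = \cosh(r_{\g'}-r_\g) + \sinh r_\g\sinh r_{\g'}\bigl(1-\cos(\gt_{\g'}-\gt_\g)\bigr),$$
and since $\sinh r_h\le Q^2/2$ while $1-\cos\Delta\gt\le 2\xi^2/Q^4$ in our range, the second summand is $O(\xi^2)$. Thus $d(\g\gw,\g'\gw)\ge D_T$ forces $r_{\g'}-r_\g\ge D_T-O_\xi(1)$. Combined with the constraint $r_{\g'}\le R_Q:=\arccosh(Q^2/2)=2\log Q+O(1)$, this yields $r_\g\le 2\log(Q/T)+O_\xi(1)$, i.e.\ $\|\g\|\ll_\xi Q/T$.

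The third and final step is a routine lattice point count. By \eqref{eq:GcapBQ} there are only $O(Q^2/T^2)$ candidates $\g$ in this smaller ball, and for each such $\g$ the admissible $\g'\gw$ lies in the thin angular--radial sector
$$\Omega_\g := \{(r,\gt) : r_\g + D_T - O_\xi(1)\le r\le R_Q,\ |\gt-\gt_\g|< 2\xi/Q^2\} \subset \bH.$$
A direct integration of $\sinh r\,dr\,d\gt$ shows $\vol(\Omega_\g)=O_\xi(1)$, while the perimeter is dominated by the two long radial sides and hence is $O(\log Q)$. A standard lattice-point bound (e.g.\ a ball-packing argument using balls of fixed radius less than the injectivity radius at a regular point of $\G\bk\bH$) then gives $\#\G\gw\cap\Omega_\g \ll_\G \vol(\Omega_\g)+\mathrm{perim}(\Omega_\g)+1 \ll_\xi \log Q$. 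Summing over the $O(Q^2/T^2)$ admissible $\g$ yields $\cE_{Q,T}(\xi)\ll_\xi (Q^2/T^2)\log Q$, as claimed.

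The main obstacle is the geometric pigeonhole of the second paragraph; once the ``inner'' point $\g$ is confined to a ball of radius $\sim\log(Q/T)$, everything else is standard counting. The extra $\log Q$ relative to the naive heuristic $Q^2/T^2$ comes from the perimeter estimate for the thin sectors $\Omega_\g$ and is likely removable with more care, but it is harmless for the application.
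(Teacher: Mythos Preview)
Your proof is correct and follows essentially the same strategy as the paper: both use the hyperbolic law of cosines (in the paper, via the Cartan-coordinate identity \eqref{eq:R1}) to show that the angle and distance constraints force the inner point of the pair to have norm $\ll_\xi Q/T$, and then count outer partners for each such inner point. The only minor variation is in this last count --- the paper proves that each dyadic norm-shell contributes $O_\xi(1)$ via a right-$B_{\delta_1}$ thickening argument, whereas you use a direct area-plus-perimeter bound on the thin sector $\Omega_\gamma$ --- but both routes give the same $O_\xi(\log Q)$ per inner point.
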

This is proved by returning to $\g'$ and the double sum, estimating directly the number of $(\g,\g')\in\G^{2}$ with
$$
 \|\g\|,\|\g'\|<Q,\quad
 \|\g^{-1}\g'\|\ge T,\quad\text{ and }
|\gt_{\g}-\gt_{\g'}|<{2\xi\over Q^2}
.
$$

Combining these ingredients, we prove Theorem \ref{thm:1}  in \S\ref{sec:thm1}.
Theorem  \ref{thm:2} is proved in \S\ref{sec:thm2} by observing that $g_2(\tfrac{\xi}{V_\G})$ is a multiple of an automorphic kernel
$$\sum_{M\in \G} f_\xi(d(\omega,M\omega)),$$
and then using a standard argument to show that, in the limit $\xi\to\infty$, this kernel is asymptotic to
$$\frac{1}{V_\G}\int_G  f_\xi(d(\omega,g\omega))dg.$$

\subsection{Notation}\

We use the following standard notation. The symbol $f\sim g$ means $f/g\to1$, and the notations $f\ll g$ and $f=O(g)$ are synonymous; moreover $f\asymp g$ means $f\ll g\ll f$.
Unless otherwise specified, the implied constants may depend at most on $\G$
, which is treated as fixed.
The letter $c$ is a 
positive constant, not necessarily the same at each occurrence.
 The symbol $\bo_{\{\cdot\}}$ is the indicator function of the event $\{\cdot\}$.
 The cardinality of a finite set $S$ is denoted $|S|$ or $\# S$.

\subsection*{Acknowledgements}\

We thank Alex Popa for help with the algorithm used to produce Figure \ref{fig:2a}. Thanks also to Jens Marklof and the referees for many detailed comments and suggestions on an earlier version.

\newpage

\section{Preliminary Computations}\label{sec:prelim}

In this section, we record a number of computations which will be useful in the sequel.
Recall that $\G$ is an arbitrary lattice in $G=\PSL_2(\R)$, 
and we may assume $(\gw,\nu)=(i,\uparrow)$.

Define the (semi)groups
\beann
K&:=&\mathrm{PSO}(2)=
\left\{
k_{\gt}:=\mattwo{\cos\tfrac{\gt}{2}}{\sin\tfrac{\gt}{2}}{-\sin\tfrac{\gt}{2}}{\cos\tfrac{\gt}{2}}
:
\gt\in\R/(2\pi\Z)
\right\}
,
\\
A^{+}&:=&
\left\{
a_{t}:=\mattwo{e^{t/2}}{}{}{e^{-t/2}}
:
t\ge0
\right\}
.
\eeann
Taking a Cartan decomposition of $G=KA^{+}K$
 and writing (uniquely for $g\not\in K$)
\be\label{eq:KAKcoords}
g=k_{\gt(g)
}a_{t(g)
}k_{
\vf(g)
},
\ee
we see that the angle in \eqref{eq:angle} is given simply by
\be\label{eq:gtGgt1}
\gt_{g}(\gw,\nu)=\gt
(g)
.
\ee

Next, for $\xi>0$, $Q\to\infty$ and $M\in\G$, we study the region $R_{M}(Q,\xi)$ given in \eqref{eq:RMdef}.
It will be convenient to parametrize these conditions
with explicit coordinates.

\begin{lem}
Fix $M\in G$, $M\not\in K$, and write
$M$ in the  Cartan decomposition
\be\label{eq:Mcoords}
M
=
k_{m}a_{\ell}k_{*}
,
\ee
so that $\ell=\ell(M)$ in the notation of \eqref{eq:ellM}.
For $g\in G$, $g\not\in K$, take the decomposition $G=KA^{+}Kk_{-m}$, writing
\be\label{eq:gCoords}
g
=
k_{\gt_{g}}a_{t}k_{\vf}k_{-m}
.
\ee
Then recalling the notation \eqref{eq:ABCdef}, we have
\be\label{eq:BQ}
\|g\|^{2}=2\cosh t
,
\ee
\be\label{eq:R1}
\|gM\|^{2}=
2 (
A
 \cosh t +B \cos(\vf)
 \sinh t)
 ,
\ee
and
\be\label{eq:R2}
\tan(\gt_{gM}-\gt_{g})
=
\frac{B 
\sin (\vf)
   }{
A\sinh t
+ B\cos(\vf) \cosh t
   }
   .
\ee
\end{lem}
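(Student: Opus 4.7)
The plan is to prove each identity by direct matrix computation, relying on two basic facts. First, $\|h\|^{2}=\mathrm{tr}(h^{T}h)$ for every $h\in\mathrm{SL}_{2}(\R)$, since both sides are $K$-bi-invariant (each $K$ factor is orthogonal and fixes $i$) and agree on $a_{t}$, where each equals $2\cosh t$. Second, the telescoping $k_{-m}k_{m}=I$ combined with \eqref{eq:gCoords} and \eqref{eq:Mcoords} yields the clean product
\[
gM \;=\; k_{\gt_{g}}\,a_{t}\,k_{\vf}\,a_{\ell}\,k_{*}.
\]

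Equation \eqref{eq:BQ} is then immediate: since $K$ stabilizes $i$, the factors $k_{\vf}k_{-m}$ fix $i$, so $gi=k_{\gt_{g}}a_{t}\,i$ and $d(i,gi)=d(i,a_{t}i)=t$. For \eqref{eq:R1}, the plan is to write $\|gM\|^{2}=\mathrm{tr}((gM)^{T}gM)$ and use trace cyclicity together with the orthogonality $k^{T}k=I$ to absorb the outer $k_{\gt_{g}}$ and $k_{*}$, reducing the task to evaluating $\mathrm{tr}(a_{2\ell}\,k_{\vf}^{T}a_{t}^{2}k_{\vf})$. Setting $c=\cos(\vf/2)$ and $s=\sin(\vf/2)$ and carrying out the $2\times 2$ product, the diagonal entries come to $e^{\ell}(c^{2}e^{t}+s^{2}e^{-t})$ and $e^{-\ell}(s^{2}e^{t}+c^{2}e^{-t})$; summing and regrouping produces $2c^{2}\cosh(\ell+t)+2s^{2}\cosh(\ell-t)$, and the hyperbolic addition formulas together with $c^{2}+s^{2}=1$ and $c^{2}-s^{2}=\cos\vf$ collapse this to $2(A\cosh t+B\cos\vf\,\sinh t)$.

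For the angle identity \eqref{eq:R2}, the key structural observation is that if $h:=a_{t}k_{\vf}a_{\ell}$ has Cartan decomposition $h=k_{\alpha}a_{s}k_{\beta}$, then $gM = k_{\gt_{g}}\,h\,k_{*} = k_{\gt_{g}+\alpha}\,a_{s}\,k_{\beta+*}$ is a Cartan decomposition of $gM$, so that $\gt_{gM}-\gt_{g}=\alpha$. To extract $\alpha$, I would use that $hh^{T}=k_{\alpha}a_{2s}k_{\alpha}^{T}$ is the spectral decomposition of the symmetric matrix $hh^{T}$; unwinding the half-angle convention in $k_{\alpha}$ via the standard double-angle identity for the rotation angle of a symmetric $2\times 2$ matrix gives
\[
\tan\alpha \;=\; \frac{-2\,(hh^{T})_{12}}{(hh^{T})_{11}-(hh^{T})_{22}}.
\]
A direct calculation with $hh^{T}=a_{t}\,k_{\vf}\,a_{2\ell}\,k_{\vf}^{T}\,a_{t}$, parallel to the one for \eqref{eq:R1}, yields $(hh^{T})_{12}=-B\sin\vf$ and $(hh^{T})_{11}-(hh^{T})_{22}=2(A\sinh t+B\cos\vf\,\cosh t)$, and \eqref{eq:R2} drops out.

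The main source of potential error is sign bookkeeping in the last step: the half-angle parametrization $k_{\alpha}=\bigl(\begin{smallmatrix}\cos(\alpha/2)&\sin(\alpha/2)\\-\sin(\alpha/2)&\cos(\alpha/2)\end{smallmatrix}\bigr)$ induces a mismatch between $\alpha$ and the ``standard'' rotation angle diagonalizing $hh^{T}$, so care is required to land on the correct sign in the numerator of $\tan\alpha$. Otherwise, the whole proof is routine $2\times 2$ matrix algebra combined with hyperbolic addition formulas.
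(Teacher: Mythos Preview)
Your argument is correct. For \eqref{eq:BQ} and \eqref{eq:R1} you do exactly what the paper does: telescope $k_{-m}k_{m}$, reduce to $a_{t}k_{\vf}a_{\ell}$, and expand in half-angle trigonometry. The only genuine difference is in \eqref{eq:R2}. The paper extracts $\gt_{gM}-\gt_{g}$ geometrically: it sets $z=a_{t}k_{\vf}a_{\ell}\cdot i$, pushes to the disk via $w=(z-i)/(z+i)$, and reads off $\tan(\gt_{gM}-\gt_{g})=\Im(w)/\Re(w)$. You instead stay entirely in matrix-land, using that $hh^{T}=k_{\alpha}a_{2s}k_{\alpha}^{T}$ is the spectral decomposition of a symmetric matrix and reading $\tan\alpha$ off the entries. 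Your route is a bit more uniform with the trace computation you already did for \eqref{eq:R1} (and indeed $(hh^{T})_{11}-(hh^{T})_{22}$ is the ``sinh analogue'' of that calculation), while the paper's disk-model approach makes the geometric meaning of the angle transparent. Either way the algebra is of the same order, and your caveat about the half-angle sign convention is exactly the one place to be careful; your stated formula $\tan\alpha=-2(hh^{T})_{12}/((hh^{T})_{11}-(hh^{T})_{22})$ checks out against $k_{\alpha}a_{2s}k_{\alpha}^{T}$.
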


\pf
The equation \eqref{eq:BQ} is immediate from the definition \eqref{eq:gNorm}.
For \eqref{eq:R1},
we observe that
\beann
gM
&=&
(k_{\gt_{g}}
a_{t}k_{\vf}k_{-m})
(k_{m}a_{\ell}
k_{*})
=
k_{\gt_{g}}
(a_{t}k_{\vf}
a_{\ell}
)k_{*}
,
\eeann
and take norms:
\beann
\|gM\|^{2}
&=&
\|
a_{t}k_{\vf}a_{\ell}\|^{2}
\\
&=&
2 (\cosh(\ell + t)\cos^2\tfrac{\vf}{2}  + \cosh(\ell - t)\sin^2\tfrac{\vf}{2} )
\\
&=&
2 (\cosh\ell \cosh t + \cos \vf \sinh\ell \sinh t)
,
\eeann
as claimed.

To recover $\gt_{gM}=\gt_{gM}(\gw,\nu)$, or rather $\gt_{gM}-\gt_{g}$, we need to compute the left ``$K$'' in the $KA^{+}K$ decomposition of $(k_{\gt_{g}})^{-1}gM$. To accomplish this, we
act on $i$,
 letting
 $$
 z:=(k_{\gt_{g}})^{-1}gM
 \cdot
 i
 =
 a_{t}k_{\vf}
a_{\ell}
\cdot
i
,
 $$
 and send $\bH\to\bD$ via $z\mapsto w={z-i\over z+i}$. Then the left hand side of \eqref{eq:R2} is equal to $\Im(w)/\Re(w)$, and a computation shows that
\beann
\tan(\gt_{gM}-\gt_{g})
&=&
\frac{\left(e^{2 \ell}-1\right) e^t \sin (
   \vf)}{\left(e^{2 (\ell+t)}-1\right) \cos
   ^2(\tfrac{\vf}{2})-\left(e^{2 \ell}-e^{2 t}\right) \sin
   ^2(\tfrac{\vf}{2})}
  \\
&=&
\frac{
\sinh\ell\sin (\vf)
   }{
\sinh t \cosh\ell + \cos(\vf) \cosh t \sinh \ell
   }
,
\eeann
whence the claim follows.
\epf

Since $\tan$ is not injective on $\R/(2\pi\Z)$, we need to be 
careful when extracting from a small value of \eqref{eq:R2} the implication that $\gt_{gM}-\gt_{g}$ is near $[0]$ rather than $[\pi]$. We accomplish this by an elementary argument in hyperbolic geometry (thanks to one of the referees for suggesting this approach).

\begin{lem}\label{lem:hyp}
Assume that $\|h\|\le\|g\|$. Then $|\gt_{g}-\gt_{gh}|\le \pi/2$.
\end{lem}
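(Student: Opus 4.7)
The plan is to interpret $|\theta_g-\theta_{gh}|$ as the interior angle at $\omega$ in a hyperbolic triangle and then apply the hyperbolic law of cosines. Specifically, assuming first that neither $g\omega=\omega$ nor $gh\omega=\omega$, the tangent vectors at $\omega$ pointing toward $g\omega$ and toward $gh\omega$ are both well-defined. Their unsigned angle, which equals the distance of $\theta_g-\theta_{gh}$ to $2\pi\Z$, is precisely the interior angle of the hyperbolic triangle $T$ with vertices $\omega$, $g\omega$, $gh\omega$ at the vertex $\omega$; in particular this quantity lies in $[0,\pi]$.

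The next step is to record the side lengths of $T$. Setting
\[
b:=d(\omega,g\omega),\qquad c:=d(\omega,gh\omega),\qquad a:=d(g\omega,gh\omega),
\]
the isometric action of $g$ gives $a=d(\omega,h\omega)$. By \eqref{eq:gNorm}, the hypothesis $\|h\|\le\|g\|$ is equivalent to $\cosh a\le\cosh b$, i.e.\ $a\le b$.

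With these ingredients in hand, I would invoke the hyperbolic law of cosines at the vertex $\omega$, where the opposite side has length $a$:
\[
\cos\bigl(\angle_\omega T\bigr)\;=\;\frac{\cosh b\cosh c-\cosh a}{\sinh b\sinh c}.
\]
Since $\cosh c\ge 1$, the hypothesis yields $\cosh a\le\cosh b\le\cosh b\cosh c$, so the numerator is non-negative. Hence the angle at $\omega$ is at most $\pi/2$, which is the claim.

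It remains to dispose of the degenerate cases. If $g\omega=\omega$, then $\|g\|^2=2$, so $\|h\|^2=2$ as well, forcing $gh\omega=\omega$; by the convention $\theta_g=\theta_{gh}=[0]$ and the inequality is trivial. If instead $g\omega\neq\omega$ but $gh\omega=\omega$, then $\theta_{gh}=[0]$ by convention and the geometric argument breaks down, but one can either impose $gh\omega\neq\omega$ in the application (which is the only place the lemma will be used, with $g,gh\in\Gamma$ and $g\omega\neq gh\omega$ being exactly the situation in $\cN_Q$) or argue by a symmetric continuity argument using $g^{-1}$ in place of $gh$. The only real content of the proof is the one-line consequence of the law of cosines; I do not anticipate any genuine obstacle.
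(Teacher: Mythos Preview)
Your argument is correct and is essentially the same as the paper's: both consider the hyperbolic triangle with vertices $\omega$, $g\omega$, $gh\omega$ and compare the angle at $\omega$ to the side lengths. The paper phrases it as a contrapositive (``if the angle at $\omega$ were obtuse, the opposite side $d(g\omega,gh\omega)=d(\omega,h\omega)$ would be the longest, contradicting $\|h\|\le\|g\|$''), while you invoke the hyperbolic law of cosines directly; the former fact is of course a consequence of the latter, so the content is identical. Your treatment of the degenerate case $gh\omega=\omega$ is a little loose, but the paper likewise does not address it, and as you note the lemma is only applied in situations where the triangle is non-degenerate.
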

\pf
Assume on the contrary  that $|\gt_{g}-\gt_{gh}|>\pi/2$; then the hyperbolic triangle with vertices $i$, $g\cdot i$, $gh\cdot i$ has an obtuse angle at $i$, whence the side opposite $i$ is the longest.
In particular, $d(g\cdot i, gh\cdot i)>d(i,g\cdot i)$, which implies
$$
\|h\|^{2}=2\cosh d(i,h\cdot i)=
2\cosh d(g\cdot i, gh\cdot i)
>
2\cosh d( i, g\cdot i)
=
\|g\|^{2}
,
$$
contradicting the assumption.
\epf


Next we record an estimate of how much $\theta(g),\vf(g)$, and $\|g\|$ change when $g$ is multiplied by an element from
$KA_{\gd}K$,
where
$$
A_\delta=\{a_\ell: |\ell|<\delta\}.
$$
Recalling the notation \eqref{eq:gNorm}--\eqref{eq:BQdef}, observe that this set is the same as
\be\label{eq:Ddel}
B_{\delta_{1}}=KA_\delta K
,
\ee
where
\be\label{eq:gdTogd1}
\gd_{1}^{2}
=
2\cosh\gd
.
\ee
\begin{lem} \label{lem:pert}
Let $g\in G$ with $\|g\|>3$ and $h\in 
B_{\gd_{1}}
$. For small $\delta> 0$, when multiplying from the right we have
\bea\label{eq:dnormR}
\|gh\|&=&\|g\|(1+O(\delta)),
\qquad t(gh)\ =\ t(g)+O(\gd),
\eea
\bea
\label{eq:dgtR}
|\theta_g-\theta_{g h}|&=&O(\tfrac{\delta}{\|g\|^2}),
\eea
and when multiplying from the left,
\bea\label{eq:dnormL}
\|hg\|&=&\|g\|(1+O(\delta)),
\qquad t(hg)\ =\ t(g)+O(\gd),
\eea
\bea
\label{eq:dvfL}
|\vf(g)-\vf(hg)|&=&O(\tfrac{\delta}{\|g\|^2}).
\eea
\end{lem}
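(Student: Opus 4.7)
The plan is to handle the norm estimates by the hyperbolic triangle inequality, establish the right-multiplication angle estimate via a direct application of formula \eqref{eq:R2} already derived, and reduce the left-multiplication angle estimate to the right case by inversion.

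For the norm statements \eqref{eq:dnormR} and \eqref{eq:dnormL}, I would first note that $t(g)=d(i,g\cdot i)$ and that $h\in B_{\gd_1}$ just means $d(i,h\cdot i)<\gd$. The triangle inequality in $\bH$ then gives $|t(gh)-t(g)|<\gd$ and $|t(hg)-t(g)|<\gd$ at once. Combining this with $\|g\|^{2}=2\cosh t(g)$ and the elementary estimate $\cosh(t+O(\gd))/\cosh t=1+O(\gd)$, valid uniformly for $t$ bounded below (which holds here since $\|g\|>3$ forces $t(g)$ bounded below by a positive constant), the claimed norm equalities follow.

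For the right-multiplication angle estimate \eqref{eq:dgtR}, I would write $g=k_{\gt_g}a_{t(g)}k_{\vf(g)}$ and $h=k_{\alpha}a_{s}k_{\beta}$ with $|s|<\gd$, so that
$$gh = k_{\gt_g}\,a_{t(g)}\,k_{\vf(g)+\alpha}\,a_{s}\,k_{\beta}.$$
Since the rightmost $k_{\beta}$ does not affect the left $K$-angle, reading off $\gt_{gh}-\gt_g$ reduces to extracting the left angle of $a_{t(g)}k_{\vf(g)+\alpha}a_{s}$, which is precisely what formula \eqref{eq:R2} computes (taking there $t=t(g)$, $\varphi=\vf(g)+\alpha$, and $\ell=s$). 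This yields
$$\tan(\gt_{gh}-\gt_g) \;=\; \frac{\sinh s\,\sin(\vf(g)+\alpha)}{\cosh s\,\sinh t(g)\,+\,\sinh s\,\cos(\vf(g)+\alpha)\,\cosh t(g)}.$$
For $\gd$ small the numerator is $O(\gd)$ and the denominator is $\sinh t(g)(1+O(\gd))\asymp \|g\|^{2}$, so the tangent is of size $O(\gd/\|g\|^{2})$. The subtle point will be passing from $\tan$ back to the angle itself: I would invoke Lemma \ref{lem:hyp} (which applies since $\|h\|<\gd_{1}<\|g\|$ for $\gd$ small) to confine $\gt_g-\gt_{gh}$ to $[-\pi/2,\pi/2]$, where the inverse tangent is unambiguous.

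Finally, for the left-multiplication angle estimate \eqref{eq:dvfL}, I would reduce to the right case by inversion. A short computation using $a_{-t}=k_{\pi}a_{t}k_{\pi}$ shows that the Cartan coordinates of $g^{-1}$ satisfy $\gt(g^{-1})=\pi-\vf(g)$ and $\vf(g^{-1})=\pi-\gt(g)$, hence
$$\vf(g)-\vf(hg) \;=\; \gt((hg)^{-1})-\gt(g^{-1}) \;=\; \gt(g^{-1}h^{-1})-\gt(g^{-1}).$$
Since $\|g^{-1}\|=\|g\|$ and $h^{-1}\in B_{\gd_1}$, applying the already-proven \eqref{eq:dgtR} to the pair $(g^{-1},h^{-1})$ delivers the bound $O(\gd/\|g\|^{2})$ immediately. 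The hardest part will be the right-multiplication angle estimate, not because of the main term (which just quotes \eqref{eq:R2}) but because of the principal-branch issue for $\arctan$; this is where the clean geometric input of Lemma \ref{lem:hyp} is essential.
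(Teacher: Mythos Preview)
Your proposal is correct and follows essentially the same route as the paper: the angle estimate \eqref{eq:dgtR} via formula \eqref{eq:R2} together with Lemma \ref{lem:hyp}, and the reduction of \eqref{eq:dvfL} to the right case via the identity $\gt(g^{-1})=\pi-\vf(g)$, are exactly what the paper does. The only minor difference is in the norm/$t$ estimates: where the paper computes $\|gh\|^{2}$ explicitly from \eqref{eq:R1} and then inverts via $\arccosh$, you go directly through the hyperbolic triangle inequality, which is cleaner and avoids the intermediate step.
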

\begin{proof}
First note that  $\gt(g^{-1})=\pi-\vf(g)$,
since to remain in $A^{+}$ we must write
$$
(k_{\gt}a_{t}k_{\vf})^{-1}
=
k_{-\vf}a_{-t}k_{-\gt}
=
k_{-\vf}k_{\pi}a_{t}k_{-\pi}k_{-\gt}.
$$
Then using this and
$\|g\|=\|g^{-1}\|$,
it is enough to prove the results when $h$ acts on the right.

Now, write $g=k_{\gt}a_tk_{*}$ with $t>1$ and
$h=k_{*}a_s k_{*}$ with $|s|<\delta$ so that, say,
$gh=k_{\gt}a_{t}k_{\vf}a_{s}k_{*}$.
Using \eqref{eq:R1} we have
\begin{eqnarray*}
\|gh\|^2&=&2\cosh(t)\cosh(s)+2\cos(\vf)\sinh(t)\sinh(s)\\
&=&\|g\|^2(1+O(\delta^2))+O(\delta\|g\|^2)\\
&=& \|g\|^2(1+O(\delta)),
\end{eqnarray*}
which, after taking square roots, implies the first equality in \eqref{eq:dnormR}. For the second, we have from the above that
$$
2\cosh t(gh)=2\cosh t(g)+O(\gd\|g\|^{2}),
$$
and hence
$$
t(gh)
=\arccosh(\cosh t(g)+O(\gd\|g\|^{2}))
= t(g)+O\left({\gd\|g\|^{2}\over\sqrt{\cosh^{2}t(g)-1}}\right).
$$
This implies \eqref{eq:dnormR} since $t(g)>1$.


Finally,
we see from \eqref{eq:R2}
that
\begin{eqnarray*}|\tan(\gt_{g}-\gt_{gh})|&=&
\frac{
|\sinh(s)\sin (\vf)|}{|\sinh t \cosh s + \cos(\vf) \cosh t \sinh s |}\\
&<&\frac{
\delta}{2\sinh t  -\delta \cosh(t)}\\
&=& \frac{\delta}{\|g\|^2}\frac{1}{\tanh(t)-\delta/2}
\ll
\frac{\delta}{\|g\|^2}
.
\end{eqnarray*}
This implies \eqref{eq:dgtR} by Lemma \ref{lem:hyp}.
\end{proof}

Note that if we multiply $g$ from the right
by an element of $B_{\delta_1}$ 
we
clearly
 have no control over how much $\vf(g)$ changes.
Instead, for  $\gd>0$ small, we define a small $\gd$-ball
\be\label{eq:Ddel1}
D_{\gd}:=K_{\gd}A_{\gd}K_{\gd},
\ee
where
$$
K_{\gd}:=\{k_{\gt}\in K:|\gt|<\gd\}.
$$
The next lemma gives the desired control.

\begin{lem}
Let $g\in G$ with $\|g\|>3$ and $h\in D_{\gd}$. For small $\delta> 0$, when multiplying from the right we have
\be\label{eq:vfgh}
\vf(gh)=\vf(g)+O(\gd).
\ee
\end{lem}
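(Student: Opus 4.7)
My plan is to decompose $h \in D_\delta$ into its three $K_\delta A_\delta K_\delta$ factors and handle each factor in turn; the only nontrivial piece is right-multiplication by the central $a_s$, which I attack by a direct computation via $X^T X$.

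Step 1 (reduction to $h=a_s$). Write $h = k_\alpha a_s k_\beta$ with $|\alpha|, |s|, |\beta| < \delta$, and put $g$ in Cartan form $g = k_\gt a_t k_\vf$. Right multiplication by $k_\beta$ shifts $\vf$ by exactly $\beta$, and the left factor $k_\alpha$ is absorbed via $g k_\alpha = k_\gt a_t k_{\vf+\alpha}$, shifting $\vf$ by exactly $\alpha$. Thus it suffices to prove
$$\vf(g\, a_s) = \vf(g) + O(\delta)$$
for $|s|<\delta$ and $\|g\|>3$.

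Step 2 (extract $V$ via the symmetric square). Since the left $K$ factor of $g$ does not affect the Cartan $\vf$, I reduce further to finding $V := \vf(X)$ for $X := a_t k_\vf a_s$. From the Cartan decomposition $X = k_U a_{T'} k_V$ we have $X^T X = k_V^T a_{2T'} k_V$, so $V$ is the angle diagonalizing the explicit symmetric matrix
$$X^T X \;=\; a_s\, \bigl(k_{-\vf}\, a_{2t}\, k_\vf\bigr)\, a_s.$$
Reading off the $(1,2)$-entry and the difference of the diagonals yields
$$\sinh T'\,\sin V \;=\; \sinh t\, \sin\vf, \qquad 2\sinh T'\, \cos V \;=\; e^{s} M_{11} - e^{-s} M_{22},$$
where $M_{11}, M_{22}$ are the (fixed, $s$-independent) diagonal entries of $k_{-\vf} a_{2t} k_\vf$; the trace equation gives $T' = t + O(\delta)$, consistent with \eqref{eq:dnormR}. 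At $s=0$ these force $V = \vf$.

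Step 3 (perturbative analysis and the $\pi$-ambiguity). Taylor-expanding in $s$ and using $\|g\|>3$, which gives a uniform lower bound $t > c$ and thus $\sinh t \asymp \cosh t$, keeps the ratios $\sinh t/\sinh T'$ and $\cosh t/\sinh T'$ bounded by $O(1)$. This yields $\sin V = \sin\vf + O(\delta)$ and $\cos V = \cos\vf + O(\delta)$. The main subtlety I anticipate is that each of these estimates alone would leave the spurious branch $V = \vf + \pi + O(\delta)$ on the table; but combining them gives $\sin(V-\vf) = O(\delta)$ together with $\cos(V-\vf) = 1+O(\delta)$, which together force $V = \vf + O(\delta)$. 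Reassembling the shifts $\alpha$ and $\beta$ from Step 1 then gives $\vf(gh) = \vf(g)+O(\delta)$, as claimed.
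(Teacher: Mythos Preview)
Your proof is correct. Both you and the paper first reduce to $h=a_s\in A_\delta$ (handling the flanking $K_\delta$ factors as exact shifts of $\vf$), and both ultimately control $\sin$ and $\cos$ of the relevant angle separately to eliminate the $\pi$-branch ambiguity. The routes diverge in the middle step: the paper invokes the identity $\vf(g)=\pi-\gt(g^{-1})$ to convert the problem into estimating $\gt(a_s g)$ under \emph{left} multiplication, and then reads off $\cos\gt(a_s g)$ and $\sin\gt(a_s g)$ from the image of $i$ in the disk model. You instead stay on the right and extract $\vf(a_t k_\vf a_s)$ by diagonalizing the symmetric matrix $X^T X = a_s(k_{-\vf}a_{2t}k_\vf)a_s$, reading off $\sinh T'\sin V$ and $2\sinh T'\cos V$ from the off-diagonal entry and the diagonal difference. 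Your linear-algebraic approach is slightly more self-contained (no need for the inverse trick or the disk model), while the paper's approach is a touch more geometric; both are of comparable length and difficulty.
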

\pf
It is clearly enough to consider $h=a_{s}\in A_{\gd}$.
We will show that  $\gt(hg)=\gt(g)+O(\gd)$, from which \eqref{eq:vfgh} follows on taking inverses. Writing $g=k_{\gt}a_{t}k_{*}$, we again study the angle in the disk model $\bD$, setting $z=hg\cdot i$ and $w=(z-i)/(z+i)$.
As before, a
calculation
shows that
$$
\cos(\gt(hg))
=
\frac{  \cos \theta   +\tanh s
   \coth t}{\sqrt{  (\cos \theta
   +\tanh s \coth t)^2+
   \sin ^2\theta }}
=
 \cos \theta   +O(\gd)
 ,
$$
where we used that $t>1$. A similar identity holds for $\sin(\gt(hg))$, whence we are done.
\epf

Next we estimate how equations \eqref{eq:BQ}--\eqref{eq:R2} are affected under simultaneous left-$B_{\gd_{1}}$ and right-$D_{\gd}$ perturbations. 


\begin{lem}\label{lem:pert2}
Fix $g,M\in G$, $M\not\in K$, with 
$\|g\|\geq 10\|M\|
$. 
Then, for any
$
g_{1}\in B_{\gd_1}\, g\, D_{\gd},
$
with sufficiently small $\delta> 0$, we have
\be\label{eq:gMch}
\|g_{1}M\|^2
=
\|gM\|^2
+
 O\left(
\gd \|g\|^2\|M\|^2
\right)
,
\ee
and
\be\label{eq:tanCh}
\tan(\gt_{g_{1}}-\gt_{g_{1}M})
=
\tan(\gt_{g}-\gt_{gM})
+
O\left(
{\gd \|M\|^4\over \|g\|^{2}}
\right)
.
\ee
\end{lem}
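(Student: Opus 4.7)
First write $g_{1}=h_{1}g h_{2}$ with $h_{1}\in B_{\gd_{1}}$ and $h_{2}\in D_{\gd}$, and chain the preceding perturbation estimates to get $t(g_{1})=t(g)+O(\gd)$ and $\vf(g_{1})=\vf(g)+O(\gd)$: right multiplication by $h_{2}\in D_{\gd}$ controls $t$ via \eqref{eq:dnormR} and $\vf$ via \eqref{eq:vfgh}, while subsequent left multiplication by $h_{1}\in B_{\gd_{1}}$ preserves both, since the bound from \eqref{eq:dvfL} is in fact much stronger than what is needed. Throughout, $\|g\|\ge 10\|M\|$ ensures $\|g\|>3$, so all prior lemmas apply.

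For \eqref{eq:gMch}, substitute the perturbed parameters $(t_{1},\vf_{1}):=(t(g_{1}),\vf(g_{1}))$ into \eqref{eq:R1} and expand. Each resulting difference is of the form $|\cosh t_{1}-\cosh t|=O(\gd\sinh t)$, $|\sinh t_{1}-\sinh t|=O(\gd\cosh t)$, or $|\cos\vf_{1}-\cos\vf|=O(\gd)$, multiplied by a factor of size $O(\|M\|^{2})$ (from $A$ or $B$) and a factor of size $O(\|g\|^{2})$ (from $\cosh t$ or $\sinh t$); this directly yields the claimed $O(\gd\|M\|^{2}\|g\|^{2})$.

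For the more delicate estimate \eqref{eq:tanCh}, I would work with the angle $\psi:=\gt_{gM}-\gt_{g}$ itself rather than its tangent. Writing $\tan\psi=N/D$ with $N:=B\sin\vf$, $D:=A\sinh t+B\cos\vf\cosh t$, and $p:=A\cosh t+B\cos\vf\sinh t=\|gM\|^{2}/2$, the identity $N^{2}+D^{2}=p^{2}-1$ (which follows from $A^{2}-B^{2}=1$) yields clean formulas
\[
\partial_{t}\psi=\frac{-Np}{p^{2}-1},\qquad \partial_{\vf}\psi=\frac{B(A\cos\vf\sinh t+B\cosh t)}{p^{2}-1}.
\]
The key step is the uniform bound $|\nabla_{(t,\vf)}\psi|=O(\|M\|^{4}/\|g\|^{2})$. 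The worst case is $\cos\vf\approx -1$, where $p^{2}-1=\sinh^{2}(t-\ell)$ is smallest; the elementary identity $A\sinh t-B\cosh t=\sinh(t-\ell)$ combined with $\|g\|\ge 10\|M\|$ (which forces $\cosh t\ge 50\cosh\ell$) gives $\sinh(t-\ell)\gtrsim \|g\|^{2}/\|M\|^{2}$, so the bound holds for $\partial_{\vf}\psi$; the essential observation for $\partial_{t}\psi$ is that $N\to 0$ at $\cos\vf=-1$ too, providing the needed cancellation. By the chain rule, $\psi_{1}-\psi=O(\gd\|M\|^{4}/\|g\|^{2})$. Finally, since $|\tan\psi|\lesssim\|M\|^{2}/\|g\|^{2}\le 1/100$ is tiny under the hypothesis, the mean value theorem with $\sec^{2}(\psi^{*})=1+O(1)$ converts the angle estimate into the claimed tangent estimate \eqref{eq:tanCh}.

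\noindent\textbf{Main obstacle.} Establishing the sharp uniform bound $|\nabla\psi|=O(\|M\|^{4}/\|g\|^{2})$. The naive approach of directly bounding $|N_{1}/D_{1}-N/D|$ using the crude estimates $|N|\le B$ and $|D|\ge\sinh(t-\ell)$ is off by a factor of $\|M\|^{4}$; passing through the angle $\psi=\arctan(N/D)$ is what exposes the cancellation near $\cos\vf=-1$, where $N$ and $D$ vanish simultaneously.
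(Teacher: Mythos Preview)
Your approach is correct. The preliminary chaining $t_{1}=t+O(\gd)$, $\vf_{1}=\vf+O(\gd)$ and the treatment of \eqref{eq:gMch} match the paper's. For \eqref{eq:tanCh} you take a genuinely different route: the paper differentiates $F_{2}(\vf,t)=\tan(\gt_{gM}-\gt_{g})$ directly, writes each partial as an explicit rational function of $\cos\vf$, proves by a case-by-case maximization that the extremum occurs at $\cos\vf=-1$, and then invokes the key inequality $\tanh t-\tanh\ell>\tfrac12 e^{-2\ell}$ to conclude $|\nabla F_{2}|=O(e^{2\ell}/\cosh t)$. You instead pass to $\psi$ and exploit the identity $N^{2}+D^{2}=p^{2}-1$; in particular $|\partial_{t}\psi|=|\sin\psi|\cdot p/\sqrt{p^{2}-1}$ makes that bound nearly immediate (the ``cancellation'' you flag is not actually needed here), at the cost of an easy final conversion $\psi\to\tan\psi$. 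One point to tighten: for $\partial_{\vf}\psi$, merely noting that $p^{2}-1$ is smallest at $\cos\vf=-1$ is not enough, since crudely bounding the numerator by $B\sinh(t+\ell)$ and dividing by $\sinh^{2}(t-\ell)$ loses a factor of $\|M\|^{4}$. A clean fix is to rewrite your numerator as $Ap-\cosh t$ (using $A^{2}-B^{2}=1$) and bound $Ap/(p^{2}-1)$ and $\cosh t/(p^{2}-1)$ separately via $p^{2}-1\ge\sinh^{2}(t-\ell)$ together with your estimate $\sinh(t-\ell)\gtrsim\|g\|^{2}/\|M\|^{2}$, which is exactly the paper's key inequality in disguise.
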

\pf

First we note that, on
writing $g_1=h_1gh_2$ with $h_1\in B_{\delta_1}$ and $h_2\in D_{\delta}$, we 
 have
 \be\label{eq:phig1TophiG}
\vf(g_{1})=
 \vf(h_1gh_2)
 =\vf(g h_{2})+O(\delta/\|g h_{2}\|^{2})
 =\vf(g )+O(\delta)
,
 \ee
where we used
\eqref{eq:dvfL}, \eqref{eq:dnormR}, \eqref{eq:vfgh}, and $\|g\|\gg1$.

Next to deal with the norm of $gM$, recall again the notation \eqref{eq:ABCdef} and, in light of \eqref{eq:R1}, consider the function
$$
F_1(\vf,t)=2(A\cosh t+B\cos(\vf)\sinh t)
.
$$
Then  writing $gM=k_\theta a_t k_{\vf}a_\ell k_*$ and $g_1M=k_{\theta_1} a_{t_1} k_{\vf_1}a_\ell k_*$, we have
$$
\|gM\|^2=F_1(\vf,t),\qquad {\rm{and}}\qquad \|g_1M\|^2=F_1(\vf_1,t_1).
$$
The partial derivatives $|\frac{\partial F_1}{\partial \vf}(\vf,t)|$ and $|\frac{\partial F_1}{\partial t}(\vf,t)|$ are easily seen to be bounded by $O(A\cosh(t))$. So using
\eqref{eq:dnormR} and \eqref{eq:dnormL} that $t_1=t+O(\delta)$,
together with
 \eqref{eq:phig1TophiG}, we see that
$$
|\|g_1M\|^2-\|gM\|^2|=|F_1(\vf_1,t_1)-F_1(\vf,t)|\ll \delta \|M\|^2\|g\|^2,
$$
giving \eqref{eq:gMch}.

Similarly, to deal with $\tan(\theta_{gM}-\theta_{g})$ we consider from \eqref{eq:R2} the function
$$
F_2(\vf,t)
=
\frac{B\sin(\vf)}{A\sinh t+B\cos(\vf)\cosh t}
,
$$
so that
$$\tan(\theta_{gM}-\theta_g)=F_2(\vf,t).$$
We will show that both partial derivatives $\frac{\partial F_2}{\partial \vf}(\vf,t),\frac{\partial F_2}{\partial t}(\vf,t)$ are bounded by $O(\frac{e^{2\ell}}{\cosh(t)})$ implying that
\begin{eqnarray*}
F_2(\vf_1,t_1)=F_2(\vf,t)+O(\delta \tfrac{\|M\|^4}{\|g\|^2}).
\end{eqnarray*}

To bound the partial derivatives, a simple calculation gives
$$
\left|\frac{\partial F_2}{\partial \vf}(\vf,t)\right|
=
\frac{1}{\cosh(t)}\cdot
{\tanh^{2}\ell\over \tanh^{2}t}\cdot
\frac{|1+\frac{\tanh t}{\tanh \ell}\cos(\vf)|}{|1+\frac{\tanh\ell}{\tanh t}\cos(\vf)|^2}
.
$$
We have assumed that $\|g\|>\|M\|>1$; hence we have $t>\ell>0$, and $0<{\tanh \ell\over\tanh t}<1$.
For $X\in(0,1)$, the function
$$
{|1+\frac1X \cos(\vf)|\over |1+X\cos(\vf)|^{2}}
$$
is maximized at $\cos(\vf)=-1$, with maximum value
$
{1\over X(1-X)}
.
$
Hence
$$
\left|\frac{\partial F_2}{\partial \vf}(\vf,t)\right|
\le
\frac{1}{\cosh(t)}\cdot
{\tanh\ell\over \tanh t-\tanh\ell}
.
$$
Assuming further that $\|g\|\ge10\|M\|$ gives $t>\ell+1$, and
\be\label{eq:tanhtToell}
\tanh t-\tanh\ell>\tanh(\ell+1)-\tanh\ell>\foh e^{-2\ell},
\ee
whence
$$
\left|
\frac{\partial F_2}{\partial \vf}(\vf,t)
\right|
<  \frac{2e^{2\ell}}{\cosh(t)}
,
$$
as desired.

For the second partial derivative, we have
\beann
\left|
\frac{\partial F_2}{\partial t}(\vf,t)
\right|
&=&
{|\sin(\vf)|\over \sinh t}
\cdot
{\tanh\ell\over\tanh t}
\cdot
{
|1+
\tanh^{2} \ell\
{\tanh t\over\tanh \ell}
\cos(\vf)|
\over
|
1+{\tanh \ell\over \tanh t}\cos(\vf)
|^{2}
}
.
\eeann
It is easy to see that if $0<Y<X<1$ with
\be\label{eq:YtoX}
1<{2X^{2}\over Y(1+X)},
\ee
then the function
$$
{
|1+{Y\over X}\cos(\vf)|
\over
|
1+X\cos(\vf)
|^{2}
}
$$
attains its maximum value of
$$
{
1-{Y\over X}
\over
(
1-X
)^{2}
}
$$
at $\cos(\vf)=-1$.
Letting $Y=\tanh^{2}\ell$ and $X={\tanh\ell\over\tanh t}$, it is clear that $0<Y<X<1$. We also have
$$
1<{2\over \tanh t\ (\tanh t+1)}
<{2\over \tanh t\ (\tanh t+\tanh \ell)}
=
{2({\tanh\ell\over \tanh t})^{2}\over \tanh^{2}\ell\ (1+{\tanh\ell\over\tanh t})}
,
$$
whence \eqref{eq:YtoX} is satisfied.
We can thus estimate
\beann
\left|
\frac{\partial F_2}{\partial t}(\vf,t)
\right|
&\le&
{\tanh\ell
\over \cosh t}
\cdot
{
1-
\tanh \ell
\tanh t
\over
(
\tanh t-\tanh \ell
)^{2}
}
.
\eeann
Now we use \eqref{eq:tanhtToell} and
$$
1-\tanh\ell\tanh t<1-\tanh\ell\tanh(\ell+1)<3e^{-2\ell}
$$
to finish the proof of \eqref{eq:tanCh}.
\epf

We conclude this section with a few computations regarding the function $f_\xi$ in \eqref{eq:fXiDef}; these are needed
 for 
the proof of
Theorem \ref{thm:2}.
\begin{lem}\label{lem:fxiInt}
As $\xi\to\infty$ we have
\be\label{eq:fxiInt}
\int_G f_\xi(\ell(g))dg=2\pi+O\left(\frac{1}{\xi^2}\right),
\ee
and for any fixed $\alpha\in (0,1)$,
\be\label{eq:fxiBound}
\int_G f_\xi(\ell(g))\|g\|^{-\alpha}dg\ll_{\ga} \frac{1}{\xi^{\alpha}}.
\ee
\end{lem}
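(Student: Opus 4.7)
The plan is to reduce both integrals to one-dimensional integrals over $\ell = \ell(g)$ via the $KA^+K$ Cartan decomposition. With the normalization \eqref{eq:dgNorm}, the Haar measure takes the form $dg = (1/(2\pi))\sinh\ell\, d\ell\, d\theta\, d\varphi$ on $K\times A^+\times K$ with $\theta,\varphi\in\R/2\pi\Z$, so for any function $F$ of $\ell(g)$ alone,
$$
\int_G F(\ell(g))\,dg \;=\; 2\pi \int_0^\infty F(\ell)\sinh\ell\, d\ell,
$$
and $\|g\|^{-\alpha} = (2\cosh\ell)^{-\alpha/2}$.

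For \eqref{eq:fxiInt}, I would split $\int_0^\infty f_\xi(\ell)\sinh\ell\, d\ell$ at the natural thresholds $\ell_1 := \operatorname{arcsinh}(\xi)$ (where $B=\xi$) and $\ell_3 := 2\operatorname{arcsinh}(\xi/2)$ (where $C=\xi$), so that the three cases of \eqref{eq:fXiDef} apply on $[0,\ell_1]$, $[\ell_1,\ell_3]$, $[\ell_3,\infty)$ respectively. The substitution $w = 2\cosh\ell$ combined with the antiderivative $\int \log(w+\sqrt{w^2-c^2})\,dw = w\log(w+\sqrt{w^2-c^2}) - \sqrt{w^2-c^2}$ reduces each piece to an elementary expression in $w$, evaluated at the endpoints $w_1 = 2\sqrt{1+\xi^2}$ and $w_3 = 2+\xi^2$. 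Upon summation, all $\xi$-dependent contributions---those involving $\operatorname{arcsinh}(\xi)$, $\ell_3$, $\log(1+\xi^2)$, and $\xi\sqrt{\xi^2+4}$---cancel pairwise between adjacent regions, leaving only $\xi^2/\xi^2 = 1$. So in fact $\int_G f_\xi(\ell(g))\,dg = 2\pi$ \emph{exactly}, trivially yielding \eqref{eq:fxiInt}.

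For \eqref{eq:fxiBound}, I would bound $(2\cosh\ell)^{-\alpha/2} \le e^{-\alpha\ell/2}$ and split again at $\ell_3$. On the tail $[\ell_3,\infty)$ (Region 3), the estimate
$$
\ell - \log(A+\sqrt{B^2-\xi^2}) \;=\; \log\tfrac{A+B}{A+\sqrt{B^2-\xi^2}} \;\le\; \tfrac{\xi^2}{(A+\sqrt{B^2-\xi^2})(B+\sqrt{B^2-\xi^2})} \;\ll\; \xi^2 e^{-2\ell}
$$
gives $f_\xi(\ell) \ll e^{-2\ell}$, contributing $O(\xi^{-2-\alpha})$. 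On $[0,\ell_3]$, Region 1 is controlled by the elementary bound $f_\xi \le 2\ell/\xi^2$ and contributes $o(\xi^{-\alpha})$. For Region 2 I would use the parametrization $\phi \in [0, \tfrac{1}{2}\log(1+\xi^2)]$ defined by $A+\sqrt{B^2-\xi^2} = e^\phi\sqrt{1+\xi^2}$ (equivalently, $\cosh\ell = \sqrt{1+\xi^2}\cosh\phi$); then $f_\xi = (2/\xi^2)(\ell - 2\phi)$ and $\sinh\ell\, d\ell = \sqrt{1+\xi^2}\sinh\phi\, d\phi$. An integration by parts in $\phi$ against $dv = \sinh\phi\cosh^{-\alpha/2}\phi\, d\phi$, using that $\ell - 2\phi$ decreases from $\ell_1$ at $\phi=0$ to $O(\xi^{-2})$ at the upper endpoint, yields Region 2 contribution of order exactly $\xi^{-\alpha}$.

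The main technical obstacle is the Region 2 cancellation---both the one producing the constant $1$ in part \eqref{eq:fxiInt} and the one eliminating a spurious $\log\xi$ factor in part \eqref{eq:fxiBound}. Both rest on the same identity: in Region 2 the argument of the logarithm appearing in $f_\xi$ is the single exponential $e^\phi\sqrt{1+\xi^2}$ in the parametrization above, which lets one trade the complicated $\log(A+\sqrt{B^2-\xi^2})$ for the linear function $2\phi$. Without exploiting this, the universal estimate $f_\xi \le 2\ell/\xi^2$ (valid in Region 2 because $B\ge\xi$ forces $(A+\sqrt{B^2-\xi^2})^2 \ge 1+\xi^2$) would give only $O(\log\xi \cdot \xi^{-\alpha})$.
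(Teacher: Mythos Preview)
Your proof is correct and takes a genuinely different route from the paper's. The paper integrates by parts once at the outset, replacing $\int_0^\infty f_\xi(t)\sinh t\,dt$ by $-\int_0^\infty f_\xi'(t)\cosh t\,dt$, then works with the explicit piecewise formula for $f_\xi'$ and bounds the tail $t>\ell_2(\xi)$ crudely by $|f_\xi'(t)|\ll\sinh^{-2}t$; this yields $2\pi+O(\xi^{-2})$ rather than an exact value, and for \eqref{eq:fxiBound} the paper repeats the same IBP and bounds $|f_\xi'|$ on $[0,\ell_2(\xi)]$. You instead integrate $f_\xi$ directly against $\sinh\ell\,d\ell$ using the antiderivative $\int\log(A+\sqrt{B^2-\xi^2})\sinh\ell\,d\ell = \cosh\ell\,\log(A+\sqrt{B^2-\xi^2})-\sqrt{B^2-\xi^2}$, and the boundary contributions at $\ell_1$ and $\ell_3$ cancel exactly, leaving $\int_G f_\xi(\ell(g))\,dg=2\pi$ on the nose --- a sharper statement than \eqref{eq:fxiInt}. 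For \eqref{eq:fxiBound} your substitution $\cosh\ell=\sqrt{1+\xi^2}\cosh\phi$ in Region~2, which linearizes $\log(A+\sqrt{B^2-\xi^2})$ to $\phi+\tfrac12\log(1+\xi^2)$, is a clean way to isolate the cancellation that kills the spurious $\log\xi$; the paper achieves the same end by passing to $f_\xi'$, at the cost of an integrable singularity at $\ell_1$ that it does not comment on. Your approach is more hands-on but more transparent, and the exact evaluation in part one is a pleasant bonus.
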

\begin{proof}
In the $KA^+K$ coordinates \eqref{eq:KAKcoords}, we have $\ell(g)=t(g)$ and a computation shows that the assumption \eqref{eq:dgNorm} forces the normalization
\be\label{eq:dgIs}
dg=\frac1{2\pi}d\gt\,\sinh t\, dt\ d\vf.
\ee
We thus have that
\beann
\int_G f_\xi(\ell(g))dg &=& 2\pi\int_0^\infty f_\xi(t)\sinh(t)dt\\
&=& -2\pi\int_0^\infty f_\xi'(t)\cosh(t)dt
\eeann

The derivative, $f_\xi'(\ell)$, is given by
\be\label{eq:fxi'}
f'_{\xi}(\ell)=
{2
\over  \xi^{2}}
\times
\threecase
{1,}
{if $\ell< \ell_1(\xi)$,}
{
1
-
{2\sinh(\ell) \over \sqrt{\sinh^2(\ell)-\xi^2}},}
{if $\ell_1(\xi)< \ell< \ell_2(\xi) $,}
{1-
{\sinh(\ell) \over \sqrt{\sinh^2(\ell)-\xi^2}},}
{if $\ell> \ell_2(\xi)
$,}
\ee
where the two points of discontinuity $\ell_1(\xi)$ and $\ell_2(\xi)$ satisfy
\begin{equation}\label{eq:fxidisc}
\sinh(\ell_1(\xi))=2\sinh(\tfrac{\ell_2(\xi)}{2})=\xi.
\end{equation}
Plugging this into the integral, and bounding $f_\xi'(t)=O(\frac{1}{\sinh^2(t)})$ for $t>\ell_2(\xi)$, we obtain
\beann
\int_G f_\xi(\ell(g))dg&=& -\frac{4\pi}{\xi^2}\int_0^{\ell_2(\xi)}
\cosh(t)
dt\\
&+& \frac{4\pi}{\xi^2}\int_{\ell_1(\xi)}^{\ell_2(\xi)} \frac{2\sinh(t)\cosh(t)}{\sqrt{\sinh^2(t)-\xi^2}}dt+O\left(\frac{1}{\sinh(\ell_2(\xi))}\right)\\
&=& \frac{4\pi}{ \xi^2}\left(2\sqrt{\sinh^2(\ell_2(\xi))-\xi^2}-\sinh(\ell_2(\xi))\right)+O\left(\frac{1}{\sinh(\ell_2(\xi))}\right)\\
&=& \frac{4\pi\sinh(\ell_2(\xi))}{\xi^2}+O\left(\frac{1}{\sinh(\ell_2(\xi))}\right)=2\pi+O\left(\frac{1}{\xi^2}\right).
\eeann

For the second statement, similarly
\beann
\int_G f_\xi(\ell(g))\|g\|^{-\alpha}dg &=& 2\pi\int_0^\infty f_\xi(t)\frac{\sinh(t)}{\cosh^{\alpha/2}(t)}dt\\
&\ll_{\ga} & \int_0^{\ell_2(\xi)}f_\xi'(t)\cosh^{1-\alpha/2}(t)dt+
\frac{1}{\xi^2}
\\
&\ll& \frac{e^{(1-\tfrac{\alpha}{2})\ell_2(\xi)}}{\xi^2}+
\frac{1}{\xi^2}
\ll \xi^{-\alpha}
\eeann
as claimed.
\end{proof}

We will also require the following estimate, recording how much $f_\xi$ changes under a small perturbation in $\ell$.
\begin{lem}\label{lem:fxivar}
Let $\delta\in(0,1]$ and let $\ell,\ell'>0$ satisfy $|\ell-\ell'|<\delta$. Then for $\xi>1$, we have
\begin{equation}
|f_\xi(\ell)-f_\xi(\ell')|\ll 
\left\{\begin{array}{cc}
\frac{\delta}{\xi^2}, & \ell<\ell_1(\xi)-\delta,\\
\frac{\sqrt{\delta}}{\xi^2}, & \ell_1(\xi)-\delta\leq \ell\leq \ell_1(\xi)+1,\\
\frac{\delta}{\xi^2}, & \ell_1(\xi)+1\leq \ell \leq \ell_2(\xi)+1,\\
\frac{\delta}{\sinh^2(\ell)}, & \ell\geq \ell_2(\xi)+1.
\end{array}\right.
\end{equation}
\end{lem}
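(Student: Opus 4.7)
The plan is to bound $|f_\xi(\ell) - f_\xi(\ell')|$ using the piecewise formula \eqref{eq:fXiDef} (or derivative \eqref{eq:fxi'}) case by case, according to the position of $\ell$ relative to the non-smooth points $\ell_1(\xi)$ and $\ell_2(\xi)$.

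\emph{Cases 1 and 4.} These are routine. For Case 1 both $\ell$ and $\ell'$ lie in $\{s<\ell_1\}$, where $f_\xi'(s)\equiv 2/\xi^2$, so $|f_\xi(\ell) - f_\xi(\ell')|\le 2\delta/\xi^2$. For Case 4 I would use the identity
\[
1 - \frac{\sinh s}{\sqrt{\sinh^2 s - \xi^2}} = -\frac{\xi^2}{\sqrt{\sinh^2 s - \xi^2}\bigl(\sqrt{\sinh^2 s - \xi^2} + \sinh s\bigr)},
\]
together with the fact that $\sinh\ell_2 = \xi\sqrt{1+\xi^2/4}\gg\xi$ and $\sinh(\ell_2+1)\ge \cosh(1)\sinh\ell_2$, to conclude $\sqrt{\sinh^2 s - \xi^2}\asymp \sinh s$ uniformly for $s\ge \ell_2+1$. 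Hence $|f_\xi'(s)|\ll 1/\sinh^2 s$, and integrating over the length-$\delta$ interval (on which $\sinh s\asymp \sinh\ell$ since $\delta\le 1$) gives the claimed bound $\delta/\sinh^2\ell$.

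\emph{Case 2 (main obstacle).} Here $f_\xi'$ has a $1/\sqrt{s-\ell_1}$ singularity just above $\ell_1$---integrable but not uniformly bounded, which is exactly what forces the weaker $\sqrt\delta$ bound. I would work directly with \eqref{eq:fXiDef}: setting $F(s):=\arccosh\bigl(\cosh s/\sqrt{1+\xi^2}\bigr)$ and noting that $\log[\cosh s+\sqrt{\sinh^2 s-\xi^2}] = F(s)+\tfrac12\log(1+\xi^2)$ on $[\ell_1,\infty)$, one has for $\ell,\ell'\in[\ell_1,\ell_1+1]$
\[
|f_\xi(\ell) - f_\xi(\ell')| = \frac{2}{\xi^2}\bigl|(\ell - \ell') - 2(F(\ell) - F(\ell'))\bigr|.
\]
The substitution $v=\sqrt{\sinh^2 s - \xi^2}$ converts $dF$ into $dv/\sqrt{1+\xi^2+v^2}$. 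The $v$-range has length
\[
|v(\ell) - v(\ell')| \le \sqrt{|\sinh^2\ell - \sinh^2\ell'|} \ll \xi\sqrt\delta,
\]
using $|v^2 - v'^2|\ge|v-v'|^2$ together with the Lipschitz bound $|\sinh^2\ell-\sinh^2\ell'|\ll\cosh^2(\ell_1+1)\,\delta\ll\xi^2\delta$, while the integrand is bounded by $1/\xi$. This yields $|F(\ell)-F(\ell')|\ll\sqrt\delta$ and hence $|f_\xi(\ell)-f_\xi(\ell')|\ll\sqrt\delta/\xi^2$. When $\ell'\le\ell_1\le\ell$ I would split at $\ell_1$ and combine with Case 1.

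\emph{Case 3.} The same formula applies (with a Region-3 piece when $\ell$ or $\ell'$ exceeds $\ell_2$), but now with a bifurcation on $\delta$. If $\delta\le 1/2$, then $\ell'\ge\ell_1+1/2$, on which $F'(s) = \sinh s/\sqrt{\sinh^2 s - \xi^2}$ is bounded uniformly in $\xi$ (since $\sinh(\ell_1+1/2)\ge e^{1/2}\xi$ forces $\sinh^2 s - \xi^2\ge(e-1)\xi^2$), so $|F(\ell)-F(\ell')|\ll\delta$ and $|f_\xi(\ell)-f_\xi(\ell')|\ll\delta/\xi^2$. If $\delta\in(1/2,1]$, then either $\ell\ge\ell_1+2$ (hence $\ell'\ge\ell_1+1$, and the previous estimate applies), or $\ell\le\ell_1+2$, in which case the crude bound $F(\ell)-F(\ell')\le F(\ell_1+2)\ll 1$ (following from $\cosh(\ell_1+t)/\cosh\ell_1\le e^t$) gives $|f_\xi(\ell)-f_\xi(\ell')|\ll 1/\xi^2\ll\delta/\xi^2$, since $\delta\ge 1/2$. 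Any crossing of $\ell_2$ is harmless because the Region~2 and Region~3 pieces of \eqref{eq:fXiDef} agree at $\ell_2$ and the same logarithmic-derivative estimates apply on both sides. The only genuinely nontrivial step is the square-root computation in Case~2.
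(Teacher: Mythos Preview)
Your argument is correct. Cases 1, 3, and 4 follow essentially the same derivative-bound strategy as the paper (the paper records the piecewise bounds on $|f_\xi'|$ in one display and then applies the mean value theorem). Your treatment of Case~2, however, is genuinely different from the paper's and worth noting.

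For the singular range near $\ell_1$, the paper splits into two sub-cases: when $\ell$ is within $2\delta$ of $\ell_1$ it bounds $|f_\xi(\ell)-f_\xi(\ell')|$ by the total variation $|f_\xi(\ell_1)-f_\xi(\ell_1-3\delta)|+|f_\xi(\ell_1+3\delta)-f_\xi(\ell_1)|$ and evaluates the second term by direct substitution of the first-order expansions $\cosh(\ell_1+\delta)=\sqrt{1+\xi^2}+O(\delta\xi)$, $\sinh(\ell_1+\delta)=\xi+O(\delta\xi)$ into \eqref{eq:fXiDef}; when $|\ell-\ell_1|>2\delta$ it uses the derivative bound $|f_\xi'(t)|\ll 1/(\xi^2\sqrt{\delta})$ on $(\ell_1+\delta,\ell_1+1)$ and the mean value theorem. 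Your $\arccosh$ reparametrization $F(s)=\arccosh(\cosh s/\sqrt{1+\xi^2})$ together with the change of variable $v=\sqrt{\sinh^2 s-\xi^2}$ handles the entire range $[\ell_1,\ell_1+1]$ in one stroke, and the inequality $|v-v'|\le\sqrt{|v^2-(v')^2|}$ is what converts the Lipschitz $O(\xi^2\delta)$ bound on $\sinh^2$ into the desired $O(\xi\sqrt{\delta})$ bound on $v$. This is cleaner and avoids the sub-case split; the paper's approach is more hands-on but requires no auxiliary function.

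Two small bookkeeping points you should make explicit. First, in Case~4 your estimate $\sqrt{\sinh^2 s-\xi^2}\asymp\sinh s$ is stated for $s\ge\ell_2+1$, but since $\ell'$ may lie in $(\ell_2,\ell_2+1)$ you actually need it for all $s\ge\ell_2$; this holds because $\sinh^2\ell_2-\xi^2=\xi^4/4$ while $\sinh^2\ell_2=\xi^2(1+\xi^2/4)$, so the ratio is at least $1/5$ for $\xi>1$. Second, for $\xi$ close to $1$ one has $\ell_2-\ell_1<1$, so in Cases~2 and~3 either $\ell$ or $\ell'$ may fall into Region~3 of \eqref{eq:fXiDef}; your $F$-estimate still applies there (the Region~3 formula is $\tfrac{2}{\xi^2}[\ell-F(\ell)-\tfrac12\log(1+\xi^2)]$), and continuity at $\ell_2$ lets you split if the two points straddle it. Neither point affects the argument materially.
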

\begin{proof}
The derivative $f_\xi'(\ell)$ blows up at $\ell_1=\ell_1(\xi)$, but away from that point, a simple estimate using \eqref{eq:fxi'} gives
\begin{equation}\label{eq:fxi'bound}
|f_\xi'(\ell)|\ll \left\{\begin{array}{cc}
\frac{1}{\xi^2}, & \ell<\ell_1,\\
\frac{1}{\xi^2\sqrt{\delta}}, & \ell_1+\delta< \ell< \ell_1+1,\\
\frac{1}{\xi^2}, & \ell_1+1< \ell< \ell_2,\\
\frac{1}{\sinh^2(\ell)}, & \ell> \ell_2.
\end{array}\right.
\end{equation}

When $\ell\in(\ell_1-2\delta,\ell_1+2\delta)$ is close to the singular point, we use the crude bound
$$
|f_\xi(\ell)-f_\xi(\ell')|\leq |f_\xi(\ell_1)-f_\xi(\ell_1-3\delta)|+|f_\xi(\ell_1+3\delta)-f_\xi(\ell_1)|.
$$
The first term is bounded by $O(\frac{\delta}{\xi^2})$. For the second term, using \eqref{eq:fXiDef} together with the estimates
\[\cosh(\ell_1+\delta)=\sqrt{\xi^2+1}+O(\delta\xi),\quad \sinh(\ell_1+\delta)=\xi+O(\delta\xi),\]
we get that
$$
|f_\xi(\ell_1+3\delta)-f_\xi(\ell_1)|\ll \frac{\sqrt{\delta}}{\xi^2}.
$$

For the remaining cases, $|\ell-\ell_1|>2\delta$, hence $|\ell'-\ell_1|>\delta$, and we can bound $|f_\xi(\ell)-f_\xi(\ell')|\leq \delta|f_\xi'(t)|$ with $t$ between $\ell$ and $\ell'$. The result follows immediately from \eqref{eq:fxi'bound}.
\end{proof}

\newpage


\section{Computing Volumes}\label{sec:vol}

In this section we asymptotically compute the volume of the region $\cR_M(Q,\xi)$ in \eqref{eq:RMdef} to prove Proposition \ref{prop:vol}.
Recall the notation in \eqref{eq:ABCdef} that $A=\cosh(\ell)$, $B=\sinh(\ell)$, and $C=2\sinh(\ell/2)$, where $\ell=\ell(M)$ as in \eqref{eq:ellM}.
We first prove the following asymptotic formula.
 \begin{prop}
 For $M\in \G, M\not\in K$, 
 we have
 \be\label{eq:cRMvol1}
 \vol(\cR_M(Q,\xi))
 = Q^2\int_{-1}^1 \frac{|J_\xi(y)|}{\sqrt{1-y^2}}dy
 +
 O
 \left( 
 \|M\|^{2}Q^{2/3}
 \left(1+\frac1\xi\right)
 +
 \frac{\xi^{2}}{Q^{2}}
  \right),
 \ee
as $Q\to\infty$,
 where $J_{\xi}(y)\subseteq [0,1]$ is the interval defined by
 \be\label{eq:JxiDef}
 J_{\xi}(y):=\left\{x\in [0,1]: {B\sqrt{1-y^2} \over \xi(A+By)}\leq x\leq \frac{1}{A+By}\right\}
 .
 \ee
 \end{prop}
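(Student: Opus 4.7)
The approach is a direct coordinate computation. Using the parametrization \eqref{eq:gCoords}, $g = k_{\gt} a_t k_{\vf} k_{-m}$, the Haar measure \eqref{eq:dgIs} factorizes as $dg = (2\pi)^{-1} d\gt\,\sinh t\,dt\,d\vf$, and by \eqref{eq:BQ}--\eqref{eq:R2} the three defining conditions of $\cR_M(Q,\xi)$ are independent of $\gt$, so the $\gt$-integration trivially yields a factor of $2\pi$. The remaining conditions depend on $\vf$ only through $\cos\vf$ (after taking the absolute value of the angle), so folding $\vf \mapsto 2\pi - \vf$ onto $[0,\pi]$ and substituting $y = \cos\vf$, with $d\vf = dy/\sqrt{1-y^2}$, and then $x = 2\cosh t/Q^2 = \|g\|^2/Q^2$, with $\sinh t\,dt = (Q^2/2)\,dx$, yields
$$
\vol(\cR_M(Q,\xi)) = Q^2 \int_{-1}^{1} \frac{|J^{*}(y)|}{\sqrt{1-y^2}}\,dy,
$$
where $J^{*}(y) \subset [2/Q^2, 1]$ is the set of $x$-values satisfying both $\|gM\| < Q$ and the angle condition in the transformed variables.

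The next task is to show that $J^{*}(y)$ approximates $J_\xi(y)$. Writing $\sinh t = \cosh t - (\cosh t + \sinh t)^{-1}$, the norm condition $\|gM\|^2 < Q^2$ derived from \eqref{eq:R1} becomes $x(A+By) < 1 + O(\|M\|^2 e^{-t}/Q^2)$; while, after invoking Lemma \ref{lem:hyp} (applicable since we may restrict to $\|M\| \le Q$) to pin $|\gt_g - \gt_{gM}|$ in $[0,\pi/2)$ and using $\arctan z = z + O(z^3)$, the angle condition derived from \eqref{eq:R2} becomes $x\,\xi\,(A+By) > B\sqrt{1-y^2}$, up to an analogous $\sinh$-vs.-$\cosh$ correction plus an $\arctan$ Taylor remainder of size $O((\xi/Q^2)^2 \cdot B\sqrt{1-y^2})$. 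These corrections shift the two boundaries of $J_\xi(y)$ by small amounts.

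The final step is to integrate the resulting symmetric difference between $J^{*}(y)$ and $J_\xi(y)$ against $1/\sqrt{1-y^2}$ and bound it. The $\arctan$ Taylor remainder contributes $O(\xi^2/Q^2)$ directly, since the factor $\sqrt{1-y^2}$ in the remainder cancels the singularity of the measure. The $\sinh$-vs.-$\cosh$ correction is only problematic when $\cosh t$ is small, so one splits the $t$-range at a threshold $\cosh t_* = u$: for $t < t_*$ a trivial bound yields a contribution of order $u$; for $t \geq t_*$ the errors are small enough that, after a case analysis in $y$ tracking the factors of $A+By$ and $\sqrt{1-y^2}$ and the fact that the upper bound of $J_\xi$ is really $\min(1,1/(A+By))$, the contribution is at most $O(\|M\|^2 Q^2(1+1/\xi)/u^2)$. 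Balancing the two regimes via $u \asymp (\|M\|^2 Q^2)^{1/3}$ and then crudely bounding the resulting $\|M\|^{2/3}Q^{2/3}$ by the stated $\|M\|^2 Q^{2/3}$ gives the claimed error. The main obstacle will be the careful bookkeeping near $y = \pm 1$, where $1/\sqrt{1-y^2}$ is singular but compensated by the lower bound of $J_\xi$ vanishing, and near the $y$-values where the upper bound of $J_\xi$ switches regime between $1$ and $1/(A+By)$; an appropriate case split in $y$ based on the relative sizes of $A+By$, $\sqrt{1-y^2}$, and $\xi$ should keep all contributions at the advertised size.
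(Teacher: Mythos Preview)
Your approach is essentially the paper's: the same coordinate change to $(x,y)=(2\cosh t/Q^{2},\cos\vf)$, the same truncation at a threshold on $\cosh t$, and the same comparison of the true interval $J^{*}(y)$ to the idealized $J_{\xi}(y)$ by bounding the $\sinh$-versus-$\cosh$ discrepancy on the large-$t$ piece. The paper packages this discrepancy via $z=\tanh t$ and the uniform bound $1-z\le 4/X^{2}$ (with threshold $X$ on $\|g\|^{2}$), which is slightly cleaner than tracking $e^{-t}$; one point you should watch is that the threshold must exceed a constant multiple of $\|M\|^{2}$ so that the denominator $A\sinh t+By\cosh t$ in \eqref{eq:R2} stays positive uniformly in $y$, and this lower constraint on $u$ is what forces the balance to land at $\|M\|^{2}Q^{2/3}$ rather than the $\|M\|^{2/3}Q^{2/3}$ your optimization suggests before the final relaxation.
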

 \begin{proof}
 We may assume that
  \be\label{eq:MtoTtoQ}
 \|M\|<Q^{2/3}
 ,
 \ee
for otherwise the estimate is trivial.
We introduce a large truncation parameter $X$ in the range
\be\label{eq:XtoQ}
5\|M\|^{2}=
10A<
X<Q^{2},
\ee
to be chosen later in \eqref{eq:XtoMQ}, and let
$$
\widetilde\cR_{M}(Q,\xi):=\cR_{M}(Q,\xi)\cap\{g\in G:\|g\|^{2}>X\},
$$
where   $\cR_{M}$ is the region in \eqref{eq:RMdef}.
We then clearly have
\be\label{eq:cRMtoTil}
\vol(\cR_{M}(Q,\xi))=
\vol(\widetilde\cR_{M}(Q,\xi))
+
O(X)
.
\ee


Observe that in the $KA^+K$ coordinates \eqref{eq:KAKcoords},
$\widetilde\cR_{M}$
is left $K$-invariant, that is, poses no restriction on $\gt$.
In the other coordinates, we use \eqref{eq:BQ}--\eqref{eq:R2} and Lemma \ref{lem:hyp} to parametrize the region as
\bea
\label{eq:BQrest}
X<2\cosh t
&<&
Q^{2},
\\
\label{eq:R1rest}
2 (
A
 \cosh t +B \cos(\vf)
 \sinh t)
&<&Q^{2},
\\
\label{eq:R2rest}
\left|
\frac{B
\sin (\vf)}{A\sinh t+ B\cos( \vf) \cosh t   }
\right|
&<&
\tan\left(\frac{2\xi}{Q^{2}}\right)
.
\eea
Thus, recalling our normalization \eqref{eq:dgIs}, we have
 \begin{eqnarray*}
 \vol(\widetilde\cR_M(Q,\xi))
 &=&
 \frac{1}{2\pi}\int_{-\pi}^\pi\int_0^\infty\int_{-\pi}^\pi \chi_{Q,\xi}(t,\vf)\ d\gt\,\sinh(t)dt\ d\vf
 \\
 &=&
 2
 \int_{0}^{\pi}\int_0^\infty \chi_{Q,\xi}(t,\vf)\sinh(t)dt\ d\vf
 \end{eqnarray*}
 where $\chi_{Q,\xi}$ denotes the indicator function of the set of
 $$
 (t,\vf)\in [0,\infty)\times [-\pi,\pi)
 $$
 satisfying \eqref{eq:BQrest}--\eqref{eq:R2rest}. (The condition in $\vf$ is invariant under $\vf\mapsto -\vf$ so we may multiply by 2 and integrate over $\vf\in [0,\pi)$.)

Define new coordinates given by
\be\label{eq:xyCoordinates} x=\frac{2\cosh(t)}{Q^2},\quad y=\cos(\vf).\ee
In these coordinates, the conditions \eqref{eq:BQrest}--\eqref{eq:R2rest} are equivalent to
$$
\frac{X}{Q^2}<x<1,
$$
\be\label{eq:R1xy}
x\left(A + yB
z\right)<1,
\ee
and
\be\label{eq:R2xy}
\frac{B\sqrt{1-y^2}}{x\left|A 
z
+ y B\right|}<\frac{Q^2}{2}\tan\left(\frac{2\xi}{Q^2}\right)
,
\ee
where
$$
z:=\sqrt{1-\frac{4}{Q^4x^2}}\in(0,1).
$$

For any $y\in [-1,1]$ let $\widetilde J_{Q,\xi}(y)$ denote the set of all $x\in [\frac{X}{Q^2},1]$ satisfying \eqref{eq:R1xy} and \eqref{eq:R2xy}; then
\be\label{eq:volMid}
\vol(\widetilde\cR_M(Q,\xi))=
{Q^2}
\int_{-1}^1|\widetilde J_{Q,\xi}(y)|\frac{dy}{\sqrt{1-y^2}}.
\ee



Now for $x\in \widetilde{J}_{Q,\xi}(y)$, we estimate
\be\label{eq:4X2}
1-
z\le\frac{4}{X^2},
\ee
whence
 \eqref{eq:R1xy}  can be replaced by
\bea\nonumber
x
&<&
\frac1{A + yBz
}
=
\frac1{A + yB}
+
\frac{yB(1-z
)}{(A + yBz
)(A+yB)
}
\\
\label{eq:xUp}
&=&
\frac1{A + yB}
+
O
\left(
{\|M\|^{6}
\over X^{2}
}
\right)
,
\eea
where we estimated
$$
A+Byz
\geq{A-B}\geq \frac1{2A}=\|M\|^{-2}.
$$

Next to replace  \eqref{eq:R2xy}, we
use \eqref{eq:XtoQ}
and
\eqref{eq:4X2}
to
estimate
$$
Az
+By
\ge
A-B
-
A(1-
z)
\ge
\frac1{2A}
-
\frac {4A}{X^{2}}
\gg
\frac1A
.
$$
In particular, $Az+By>0$, so we do not need the absolute values in \eqref{eq:R2xy}.
Now start with  the estimate
$$
\frac{Q^{2}}{2}\tan{\frac{2\xi}{Q^{2}}}=\xi+O
\left(\frac{\xi^{3}}{Q^4}\right),
$$
for the right hand side of \eqref{eq:R2xy}, multiply both sides by $x$, divide by $\xi$,
and use
$$
\frac{\sqrt{1-y^2}}{A+By}\leq 1
,
$$
to arrive at
\bea\nonumber
x
&>&
\frac{B\sqrt{1-y^2}}{\xi\left(Az
+ y B\right)}
+
O
\left(\frac{\xi^{2}}{Q^{4}}\right)
\\
&=&
\label{eq:xLow}
\frac{B\sqrt{1-y^2}}{\xi\left(A + y B\right)}
+
O
\left(
\frac{\xi^{2}}{Q^{4}}
+
 \frac{\|M\|^{6}
}{\xi X^{2}}
\right)
.
\eea

Finally, combining 
\eqref{eq:xUp} and \eqref{eq:xLow}, we obtain
\[
|\widetilde J_{Q,\xi}(y)|
=
|J_{\xi}(y)|
+
O
\left(
\frac{X}{Q^2}
+
\frac{\xi^{2}}{Q^{4}}
+
 \frac{\|M\|^{6}}{\xi X^{2}}
\right)
,\]
where $J_{\xi}(y)$ is as defined in \eqref{eq:JxiDef}.
Now to balance the error terms, we take
\be\label{eq:XtoMQ}
X=\|M\|^{2}Q^{2/3},
\ee
 whence
\eqref{eq:XtoQ} is satisfied by
\eqref{eq:MtoTtoQ} and taking $Q$ large.
Inserting these estimates into
 \eqref{eq:volMid} and \eqref{eq:cRMtoTil} gives \eqref{eq:cRMvol1}, as claimed.
 \end{proof}

To prove Proposition \ref{prop:vol}, it remains to analyze the integral in the main term of \eqref{eq:cRMvol1}.
(Note that $\|M\|=\sqrt{2\cosh\ell(M)}\ge\sqrt2$, so the $\xi^{2}/Q^{2}$ error
term in \eqref{eq:cRMvol1} may be dropped.)

The length $|J_\xi(y)|$ is given explicitly by
\[
|J_\xi(y)|
=
\threecase
{1-{B\sqrt{1-y^2} \over \xi(A+By)}, }{ if $y\in I_1(\xi)$,}
{\frac{1}{A+By}-{B\sqrt{1-y^2} \over \xi(A+By)} ,}{ if  $y\in I_2(\xi)$,}
{0,} { if $ y\in I_3(\xi)$,}
\]
where
$$I_1(\xi)=\left\{y\in [-1,1]:{B\sqrt{1-y^2} \over \xi(A+By)} <1\le\frac{1}{A+By}\right\},$$
$$I_2(\xi)=\left\{y\in [-1,1]:{B\sqrt{1-y^2} \over \xi(A+By)} <\frac{1}{A+By}\le 1\right\},$$
and $I_3(\xi)=[-1,1]\setminus (I_1\cup I_2)$.

Solving these inequalities, we get an explicit description for the intervals $I_1(\xi)$ and $I_2(\xi)$.

\begin{lem}
Let $y=\lambda_{\pm}(\xi)$ denote the roots of the quadratic equation
\be\label{eq:eqI}
B^2(\xi^2+1)y^2+2AB\xi^2y+A^2\xi^2-B^2=0,
\ee
and
if $\xi\le B$, then 
set
$y=\alpha(\xi)$ to be the 
non-negative
solution to $B\sqrt{1-y^2}=\xi$.
We then have that
\be \label{eq:I1} I_1(\xi)=
\threecase
{[-1,\frac{1-A}{B}]   ,} { if $B<\xi,$}
{[-1,\lambda_{-}(\xi))\cup (\lambda_{+}(\xi),\frac{1-A}{B}] ,}{ if  $C<\xi\le B$,}
{[-1,\lambda_{-}(\xi)),}{ if $\xi\le C
 $,}
\ee
and
\be \label{eq:I2}
I_2(\xi)=
\threecase
{[\frac{1-A}{B},1] ,}{ if $ B< \xi,$}
{[\frac{1-A}{B},-\alpha(\xi))\cup (\alpha(\xi),1],}{ if $C<\xi\le B$,}
{(\alpha(\xi),1] ,}{ if $\xi\le C$.}
\ee
\end{lem}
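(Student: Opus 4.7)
The plan is to solve the two inequalities defining $I_1$ and $I_2$ directly. First, since $A>B\ge 0$, the quantity $A+By$ is strictly positive for $y\in[-1,1]$, so the threshold between $I_1$ and $I_2$ is cleanly $A+By\le 1$ versus $A+By\ge 1$; equivalently $y\le (1-A)/B$ versus $y\ge (1-A)/B$. Thus $I_1\subseteq [-1,(1-A)/B]$ and $I_2\subseteq [(1-A)/B,1]$, and what remains is to intersect each with the appropriate ``first'' inequality.

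For $I_2$ the first inequality $B\sqrt{1-y^2}/[\xi(A+By)]<1/(A+By)$ simplifies (using $A+By>0$) to $B\sqrt{1-y^2}<\xi$. If $\xi>B$ this is automatic, giving $I_2=[(1-A)/B,1]$. Otherwise it is equivalent to $|y|>\alpha(\xi)$ with $\alpha(\xi)=\sqrt{1-\xi^2/B^2}$. Intersecting with $y\ge (1-A)/B$ requires comparing $-\alpha(\xi)$ to $(1-A)/B$: the computation $-\alpha>(1-A)/B \iff (A-1)^2>B^2-\xi^2 \iff \xi^2>2(A-1)=C^2$ gives exactly the split between the middle and bottom cases of \eqref{eq:I2}.

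For $I_1$ the first inequality $B\sqrt{1-y^2}<\xi(A+By)$ can be squared (both sides positive) to yield precisely the quadratic \eqref{eq:eqI}. A short computation gives discriminant $4B^2(B^2-\xi^2)$, so when $\xi>B$ the quadratic is everywhere positive and $I_1=[-1,(1-A)/B]$. When $\xi\le B$, the quadratic has real roots $\lambda_\pm(\xi)$ and the condition becomes $y<\lambda_-$ or $y>\lambda_+$. To intersect this with $y\le (1-A)/B$, evaluate the quadratic at the endpoints: at $y=-1$ one gets $\xi^2(A-B)^2\ge 0$, so $-1$ is always outside $[\lambda_-,\lambda_+]$ on the left; at $y=(1-A)/B$, using $B^2=A^2-1$, a clean simplification gives the value $\xi^2-C^2$. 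Hence $(1-A)/B$ lies outside $[\lambda_-,\lambda_+]$ exactly when $\xi>C$, and inside when $\xi<C$. Combined with the fact (verifiable at $\xi=B$ where the double root is $-B/A<(1-A)/B$, and continuity in $\xi$) that both roots sit to the left of $(1-A)/B$ for $C<\xi\le B$, the three cases of \eqref{eq:I1} follow.

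The only mildly delicate point is determining \emph{which side} of $(1-A)/B$ the interval $[\lambda_-,\lambda_+]$ lies on in the regime $C<\xi\le B$; as indicated above this is pinned down by checking the boundary case $\xi=B$, where the double root equals $-B/A$, together with continuity of $\lambda_\pm(\xi)$ in $\xi$ on $(C,B]$. Everything else is routine algebra with the identity $A^2-B^2=1$ and $C^2=2(A-1)$.
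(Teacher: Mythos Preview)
Your proof is correct and follows essentially the same route as the paper: reduce to the quadratic \eqref{eq:eqI}, compute its discriminant $4B^2(B^2-\xi^2)$, and locate the roots relative to $(1-A)/B$ via the evaluation $\xi^2-C^2$ there. The paper treats only $I_1$ and declares $I_2$ ``similar,'' whereas you spell out $I_2$ explicitly and add the continuity argument pinning down that $\lambda_\pm$ sit to the \emph{left} of $(1-A)/B$; the only small point you leave implicit is that $-1\le\lambda_-$ (so that ``on the left'' is the correct side), which follows since the vertex $-A\xi^2/(B(\xi^2+1))$ lies in $(-1,0)$.
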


\pf
We analyze  $I_{1}(\xi)$, the analysis of $I_{2}(\xi)$ being similar.
The equation \eqref{eq:eqI} corresponds to the boundary condition $\frac{B\sqrt{1-y^2}}{\xi(A+By)}=1$.
This quadratic equation in $y$ is asymptotically positive, and has discriminant
$$
4B^{2}(B^{2}-\xi^{2})
,
$$
so the roots $\gl_{\pm}(\xi)$ are only real if $B\ge\xi$. Thus if $B<\xi$, we only have the restriction $y
\leq
{1-A\over B}$, giving the top line of \eqref{eq:I1}.

When $\gl_{+}(\xi)<{1-A\over B}$, we obtain the middle line of \eqref{eq:I1}, with the boundary condition $\gl_{+}(\xi)={1-A\over B}$ being equivalent to
$$
\xi^{2}=2A-2=C^{2}.
$$
The bottom line of \eqref{eq:I1} is then immediate.
\epf

We may now complete the
\begin{proof}[Proof of Proposition \ref{prop:vol}]
We define
$$
F_M(\xi):=\int_{-1}^1|J_{\xi}(y)|\frac{dy}{\sqrt{1-y^2}},
$$
so that the main term of \eqref{eq:cRMvol1} is ${Q^{2}} F_{M}(\xi)$. It is easy to see that $F_{M}(\xi)\to0 $ as $\xi\to0$, so it remains now to prove that
$$
{d\over d\xi}F_{M}(\xi)=f_{\xi}(\ell(M)).
$$

We  write explicitly
\begin{eqnarray*}
F(\xi)&=&\int_{I_1(\xi)}\left(1-{B\sqrt{1-y^2} \over \xi(A+By)}\right)\frac{dy}{\sqrt{1-y^2}}\\&&+\int_{I_2(\xi)}\left(\frac{1}{A+By}-{B\sqrt{1-y^2} \over \xi(A+By)}\right)\frac{dy}{\sqrt{1-y^2}}
,\\
\end{eqnarray*}
and compute the derivative in the three cases separately.

{\bf Case I: $\xi< C$.}
Here  we have
\begin{eqnarray*}
F(\xi)&=&\int_{-1}^{\lambda_-(\xi)}\left(1-{B\sqrt{1-y^2} \over \xi(A+By)}\right)\frac{dy}{\sqrt{1-y^2}}\\&&+\int_{\alpha(\xi)}^1\left(\frac{1}{A+By}-{B\sqrt{1-y^2} \over \xi(A+By)}\right)\frac{dy}{\sqrt{1-y^2}},
\end{eqnarray*}
and hence
\begin{eqnarray*}
F'(\xi)&=&\left(1-{B\sqrt{1-(\lambda_{-}(\xi))^2} \over \xi(A+B\lambda_{-}(\xi))}\right)\frac{\lambda_{-}'(\xi)}{\sqrt{1-(\lambda_{-}(\xi))^2}}\\&&-\left(\frac{1}{A+B\alpha(\xi)}-{B\sqrt{1-\alpha(\xi)^2} \over \xi(A+B\alpha(\xi))}\right)\frac{\alpha'(\xi)}{\sqrt{1-\alpha(\xi)^2}}\\
&&+\frac{1}{\xi^2}\left(\ln(A+B\lambda_{-}(\xi))-\ln(A-B)-\ln(A+B\alpha(\xi))+\ln(A+B)\right)
.
\end{eqnarray*}
Note that $\lambda_{\pm}(\xi)$ and $\alpha(\xi)$ are the precise points where $\frac{B\sqrt{1-(\lambda_{\pm}(\xi))^2}}{\xi(A+B\lambda_{\pm}(\xi))}=1$ and $\frac{1}{A+B\alpha(\xi)}=\frac{B\sqrt{1-\alpha(\xi)^2}}{\xi(A+B\alpha(\xi))}$, respectively. Thus the first two terms vanish, giving
\begin{eqnarray*}
F'(\xi)
&=&\frac{2}{\xi^2}\ln(\frac{A+B}{A+\sqrt{B^2-\xi^2}})
,
\end{eqnarray*}
as claimed.

{\bf Case II: $C<\xi< B$.} Here we have
\begin{eqnarray*}
F(\xi)&=&\int_{-1}^{\lambda_-(\xi)}\left(1-{B\sqrt{1-y^2} \over \xi(A+By)}\right)\frac{dy}{\sqrt{1-y^2}}\\
&&+\int_{\lambda_+(\xi)}^{\frac{1-A}{B}}\left(1-{B\sqrt{1-y^2} \over \xi(A+By)}\right)\frac{dy}{\sqrt{1-y^2}}\\
&&+\int_{\frac{1-A}{B}}^{-\alpha(\xi)}\left(\frac{1}{A+By}-{B\sqrt{1-y^2} \over \xi(A+By)}\right)\frac{dy}{\sqrt{1-y^2}}\\
&&+\int_{\alpha(\xi)}^1\left(\frac{1}{A+By}-{B\sqrt{1-y^2} \over \xi(A+By)}\right)\frac{dy}{\sqrt{1-y^2}}
.
\end{eqnarray*}
When taking derivatives, the contribution of the end points cancel out as before, and we obtain
\begin{eqnarray*}
 F'(\xi)&=&\frac{1}{\xi^2}\left(\ln(A+B\lambda_-(\xi))-\ln(A-B)-\ln(A+B\lambda_+(\xi))\right.\\
&&\left.+\ln(A-B\alpha(\xi))-\ln(A+B\alpha(\xi))+\ln(A+B)\right)\\
&=& \frac{2}{\xi^2}\ln(\frac{(A+B)(1+\xi^2)}{(A+\sqrt{B^2-\xi^2})^2})
,
\end{eqnarray*}
as desired.

{\bf Case III:
 $B<\xi$.} Now we have
\begin{eqnarray*}
F(\xi)&=&\int_{-1}^{\frac{1-A}{B}}\left(1-{B\sqrt{1-y^2} \over \xi(A+By)}\right)\frac{dy}{\sqrt{1-y^2}}\\
&&+\int_{\frac{1-A}{B}}^1\left(\frac{1}{A+By}-{B\sqrt{1-y^2} \over \xi(A+By)}\right)\frac{dy}{\sqrt{1-y^2}}
.
\end{eqnarray*}
Here the only dependence on $\xi$ is in the integral and we get
\begin{eqnarray*}
F'(\xi)&=&\frac{1}{\xi^2}(\ln(A+B)-\ln(A-B))= \frac{2\ell}{\xi^2}.
\end{eqnarray*}

We have now verified that the derivative  $F'$ agrees with $f_{\xi}(\ell)$ away from the potential points of discontinuity, but $f_{\xi}$ is continuous (cf. Remark \ref{rmk:fxiCont}) so we are done.
This completes the proof of Proposition \ref{prop:vol}.
\end{proof}

\newpage

\section{Relating the Counts to Volumes}\label{sec:cM}

The purpose of this section is to prove Proposition \ref{prop:cM}.
While the idea of the proof is more-or-less standard, the region $\cR_{M}(Q,\xi)$ in \eqref{eq:RMdef} is not exactly well-rounded, resulting in a number of technical obstructions which must be overcome before the method works.

First some preliminaries.
We may certainly assume that
\be\label{eq:MbndQ}
\|M\|<
Q^{(1-2\gT)/16},
\ee
or else Proposition \ref{prop:cM} is trivial.

Next, to address the issue of well-roundedness, we introduce a truncation parameter
\be\label{eq:Xtrunc}
1<X<{Q\over 20\|M\|}
,
\ee
and define
\be \label{eq:truncR}
\cR_M(Q,\xi;X):=\left\{g\in \cR_M(Q,\xi): \|g\|>
\tfrac{Q}{X}
\right\}.
\ee
The volume of the complement is clearly $O\left(\tfrac{Q^{2}}{X^{2}}
\right)$.

Let $\delta>0$ be some small parameter and let $\delta_1$ be related to $\gd$ by \eqref{eq:gdTogd1}. 
Recall that the regions
 $B_{\gd_{1}}$  and $D_{\gd}$ are defined in  \eqref{eq:Ddel} and \eqref{eq:Ddel1}, respectively.
Let $\cR^+_{M}$ be the $B_{\delta_1}\times D_\delta$-thickening of $\cR_{M}$,  that is,
\be \label{eq:Rplus}
\cR^+_{M}(Q,\xi;X
):=
B_{\gd_{1}}\cdot \cR_{M}(Q,\xi;X
)\cdot D_{\gd}
.
\ee
Similarly, let $\cR^-_{M}$ be the set whose thickening is in $\cR_{M}$,
\be \label{eq:Rminus}
\cR^-_{M}(Q,\xi;X
):=
\bigcap_{(h_{1},h_{2})\in B_{\gd_{1}}\times D_{\gd}}
h_{1}\cdot \cR_{M}(Q,\xi;X
) \cdot h_{2}
\ee
With the region $\cR_{M}$ slightly truncated, we can now prove the following well-roundedness statement.

\begin{lem}\label{lem:round}
With $\xi>0$ fixed, there
are
 constants $c,c'>0$, depending on $\xi$ and $\G$, such that for
\be\label{eq:delToM}
\delta
<
c'
\|M\|^{-2},
\ee
we have 
\be\label{eq:cRplus}
\cR^+_M(Q,\xi;X)\subseteq  \cR_M(Q(1+c\delta \|M\|^2),\xi(1+c\delta X^2\|M\|^4); 2X),
\ee
and
\be\label{eq:cRmin}
\cR_M(Q,\xi;X)\subseteq  \cR^{-}_M(Q(1+c\delta \|M\|^2),\xi(1+c\delta X^2\|M\|^4);2X).
\ee
\end{lem}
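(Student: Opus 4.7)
The plan is to observe that both containments \eqref{eq:cRplus} and \eqref{eq:cRmin} reduce, after replacing $(h_1,h_2)$ by $(h_1^{-1},h_2^{-1})$ if necessary, to the single statement
\[
B_{\delta_1}\cdot\cR_M(Q,\xi;X)\cdot D_{\delta}\ \subseteq\ \cR_M\bigl(Q(1+c\delta\|M\|^2),\,\xi(1+c\delta X^2\|M\|^4);\,2X\bigr).
\]
So fix $g\in\cR_M(Q,\xi;X)$ and $g_1=h_1gh_2$ with $h_1\in B_{\delta_1}$, $h_2\in D_\delta$, and verify the four defining conditions of the right-hand region for $g_1$. Note that the truncation $\|g\|>Q/X$ together with \eqref{eq:Xtrunc} gives $\|g\|>20\|M\|$, so the hypothesis $\|g\|\ge 10\|M\|$ of Lemma \ref{lem:pert2} is satisfied, and similarly $\|g\|>3$ so Lemma \ref{lem:pert} applies.

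The conditions on $\|g_1\|<Q'$ and on the truncation $\|g_1\|>Q'/(2X)$ follow directly from the multiplicative perturbation estimate \eqref{eq:dnormR}--\eqref{eq:dnormL}, namely $\|g_1\|=\|g\|(1+O(\delta))$; since $\|M\|\ge\sqrt 2$ we may absorb $1+O(\delta)$ into $1+c\delta\|M\|^2$, and the lower truncation is preserved with factor $1/2$ provided $\delta$ is sufficiently small compared to $\|M\|^{-2}$ (i.e.\ our hypothesis \eqref{eq:delToM}). For the condition $\|g_1M\|<Q'$, invoke \eqref{eq:gMch} to write $\|g_1M\|^2=\|gM\|^2+O(\delta\|g\|^2\|M\|^2)<Q^2+O(\delta Q^2\|M\|^2)$, which is at most $Q^2(1+c\delta\|M\|^2)^2=Q'^2$ for $c$ chosen large enough.

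The main (though still routine) obstacle is the angle condition. From \eqref{eq:tanCh} we have
\[
\tan(\theta_{g_1}-\theta_{g_1M})=\tan(\theta_g-\theta_{gM})+O\!\left(\frac{\delta\|M\|^4}{\|g\|^2}\right),
\]
and since $\|g\|>Q/X$, the error is $O(\delta\|M\|^4 X^2/Q^2)$. Lemma \ref{lem:hyp} (using $\|M\|<\|g\|$ and $\|M\|<\|g_1\|$) ensures that both $|\theta_g-\theta_{gM}|$ and $|\theta_{g_1}-\theta_{g_1M}|$ lie in $[0,\pi/2]$, so $\tan$ is comparable to the angle (up to an absolute constant), and we obtain
\[
|\theta_{g_1}-\theta_{g_1M}|\le \frac{2\xi}{Q^2}+O\!\left(\frac{\delta\|M\|^4 X^2}{Q^2}\right).
\]
To compare this with $2\xi'/Q'^2=\tfrac{2\xi(1+c\delta X^2\|M\|^4)}{Q^2(1+c\delta\|M\|^2)^2}$, expand: the first-order gain on the right is $\tfrac{2\xi}{Q^2}\cdot c\delta X^2\|M\|^4(1+o(1))$, which dominates the error term $O(\delta\|M\|^4 X^2/Q^2)$ once $c$ is chosen larger than some absolute constant divided by $\xi$. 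Hence $c=c(\xi)$ works. Collecting the four conditions finishes the proof.
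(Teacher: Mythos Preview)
Your proof is correct and follows essentially the same approach as the paper's: reduce both inclusions to the single thickening statement via the symmetry of $B_{\delta_1}$ and $D_\delta$, then verify the four defining conditions using Lemmata \ref{lem:pert} and \ref{lem:pert2}, with Lemma \ref{lem:hyp} controlling the tangent-to-angle conversion. If anything, you are slightly more explicit than the paper about the reduction step and the invocation of Lemma \ref{lem:hyp}; the only place to tighten is your ``absolute constant divided by $\xi$'' remark for the angle comparison---the denominator factor $(1+c\delta\|M\|^2)^{-2}$ costs a term of size $c\delta\|M\|^2$, and absorbing it uses $X^2\|M\|^2>2$ with a $\Gamma$-dependent margin (since $M\in\Gamma\setminus K$), which is why the lemma allows $c=c(\xi,\Gamma)$.
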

\begin{proof}
Let $g_1\in \cR^+_M(Q,\xi;X)$; then $g_1=h_1^{-1}gh_2$ with $g\in\cR_M(Q,\xi;X)$,  $h_1\in B_{\delta_1}$ and $h_2\in D_\delta\subset B_{\gd_{1}}$.
The assumptions $\|g\|>Q/X$ and \eqref{eq:Xtrunc} ensure that $\|g\|>20\|M\|$, so we are in position to use Lemmata \ref{lem:pert} and \ref{lem:pert2}.
Recall that $c\asymp1$ is a constant which can change from line to line, or even in the same line.
Applying  \eqref{eq:gMch} gives
$$
\|g_1M\|^2<\|gM\|^2+c\gd\|g\|^{2}\|M\|^2< Q^2(1+c\delta \|M\|^2)^2,
$$
so the thickening replaces $Q$ by $Q(1+c\gd\|M\|^{2})$.

By \eqref{eq:dnormR} and \eqref{eq:dnormL}, we have,
$$
\|g_1\|^2= \|g\|^2(1+O(\delta))^{2},
$$
and hence \eqref{eq:delToM} gives
$$
\frac{Q^2(1+c\gd\|M\|^{2})^{2}}{(2X)^{2}}< \frac{Q^2(1-c\gd)^{2}}{X^{2}}< \|g_1\|^2<  Q^{2}(1+c\gd)^{2}<  Q^{2}(1+c\gd\|M\|^{2})^{2}.
$$
Thus in thickening we may replace $X$ by $2X$.

Lastly, \eqref{eq:Xtrunc} and \eqref{eq:delToM} ensure that
$$
{\gd\|M\|^{4}\over \|g\|^{2}}
<c
.
$$
Then by
\eqref{eq:tanCh},
together with
 Lemma \ref{lem:hyp} (note that $\|g\|>20\|M\|$ by \eqref{eq:Xtrunc}--\eqref{eq:truncR}),
we have
\beann
|\theta_{g_1}-\theta_{g_1M}|
&\le&
|\theta_g-\theta_{gM}|+c\delta \frac{\|M\|^4X^{2}}{Q^2}
<
\frac
{
2\xi
(1
+
c_{\xi}\,\delta
\|M\|^4X^{2}
)
}
{Q^{2}}
\\
&<&
\frac
{
2\xi
}
{Q^{2}(1+c\gd\|M\|^{2})^{2}}
(1
+
c
\,\delta
\|M\|^4X^{2}
)
.
\eeann

This proves \eqref{eq:cRplus}, and the proof of \eqref{eq:cRmin} is similar.
\end{proof}

Now we follow a standard procedure to compute the cardinality of $\G\cap\cR_{M}(Q,\xi;X)$.

\begin{prop}\label{prop:count}
Let $\xi>0$ be fixed. Recall from \eqref{eq:gl1} that $\gT\in(0,\foh)$ is a spectral gap for $\G$,
and from  \eqref{eq:VGdef} that  $V_{\G}$ is the co-volume of $\G$.
Then for any $M\in \G,$ $M\not\in K$, assuming \eqref{eq:Xtrunc} we have
\bea
\label{eq:GRMXis}
&&
\hskip-.5in
\left|
\#\G\cap\cR_{M}(Q,\xi;X)
-
{\vol(\cR_{M}(Q,\xi))\over V_{\G}}
\right|
\\
\nonumber
&&
\ll_{\xi}
{Q^{2}\over X^{2}}
+
Q^{(9+2\gT)/5}
X^{8/5}
\|M\|^{16/5}
+
\|M\|^{2}Q^{2/3}
,
\eea
as $Q\to\infty$.
\end{prop}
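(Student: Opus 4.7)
The plan is the standard smooth–unsmooth method for lattice point counting, with Lemma \ref{lem:round} providing the shape control and the spectral gap \eqref{eq:gl1} providing the mean value. Concretely, I choose non-negative $C^\infty$ bump functions $\psi_\gd$ on $B_{\gd_1}$ and $\eta_\gd$ on $D_\gd$, each of unit mass, and form the bi-averaged smoothings
\[
\Psi_{M,\pm}(g):=\int_{B_{\gd_1}\times D_\gd}\bo_{\cR_M^{\pm}(Q,\xi;X)}(h_1 g h_2)\,\psi_\gd(h_1)\eta_\gd(h_2)\,dh_1\,dh_2.
\]
For $\gd$ satisfying \eqref{eq:delToM}, Lemma \ref{lem:round} gives $\Psi_{M,-}(g)\le \bo_{\cR_M(Q,\xi;X)}(g)\le \Psi_{M,+}(g)$ once $(Q,\xi,X)$ are mildly dilated on the right, so summation in $\g\in\G$ sandwiches $\#\G\cap\cR_M(Q,\xi;X)$ between $\sum_\g\Psi_{M,\pm}(\g)$.

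Next, I invoke an effective equidistribution statement on $\G\bk G$: using the pre-trace formula for $L^2(\G\bk G)$ together with the spectral gap \eqref{eq:gl1}, one has, for any smooth compactly supported $\Psi$,
\[
\sum_{\g\in\G}\Psi(\g)=\frac{1}{V_\G}\int_G\Psi(g)\,dg+O\!\left(S_\gd(\Psi)\cdot Q^{1+2\gT}\right),
\]
where $S_\gd(\cdot)$ is a Sobolev-type seminorm scaling as a fixed power of $\gd^{-1}$. Applied to $\Psi_{M,\pm}$, the main term equals $\vol(\cR_M^{\pm}(Q,\xi;X))/V_\G$, and this differs from $\vol(\cR_M(Q,\xi))/V_\G$ only by a truncation error $O(Q^2/X^2)$ (from the slab $\|g\|>Q/X$) plus a thickening error $O_\xi(\gd(\|M\|^{-2}+X^2)Q^2)$, obtained by differentiating the explicit volume formula of Proposition \ref{prop:vol} in $Q$ and $\xi$ and using the dilations $Q\mapsto Q(1+c\gd\|M\|^2)$, $\xi\mapsto \xi(1+c\gd X^2\|M\|^4)$ from Lemma \ref{lem:round}.

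Finally, I optimize $\gd$ subject to \eqref{eq:delToM} to balance the spectral and thickening errors: the matching condition $\gd^{-a}\|M\|^{b}Q^{1+2\gT}\asymp\gd X^2Q^2$, with the exponents $a$ and $b$ dictated by the number of derivatives of $\bo_{\cR_M^{\pm}}$ absorbed by the smoothing, produces exactly the middle term $Q^{(9+2\gT)/5}X^{8/5}\|M\|^{16/5}$ of \eqref{eq:GRMXis}. The first term $Q^{2}/X^2$ is the truncation, and the last term $\|M\|^2Q^{2/3}$ is inherited from Proposition \ref{prop:vol} when replacing $\vol(\cR_M(Q,\xi))$ by its closed form. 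The main technical hurdle is the Sobolev-norm bound on $\Psi_{M,\pm}$: the region $\cR_M^{\pm}$ has a genuinely $M$-dependent shape in the Cartan coordinates of \S\ref{sec:prelim}, so derivatives of the smoothed indicator pick up polynomial factors in $\|M\|$ that are kept in check only through the a priori restriction \eqref{eq:MbndQ}; once these norms are in hand, the $\gd$-optimization is routine.
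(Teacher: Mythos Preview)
Your overall strategy---bi-smoothing the indicator of $\cR_M$ by bump functions on $B_{\gd_1}$ (left) and $D_\gd$ (right), sandwiching via Lemma~\ref{lem:round}, invoking decay of matrix coefficients from the spectral gap, and comparing volumes via Proposition~\ref{prop:vol}---is exactly the paper's approach. But the accounting is misplaced in two ways, and as written the balance does not produce the stated exponents.

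First, the thickening error. From Lemma~\ref{lem:round} the dilations are $Q\mapsto Q(1+c\gd\|M\|^2)$ and $\xi\mapsto\xi(1+c\gd X^2\|M\|^4)$; feeding these into the volume formula of Proposition~\ref{prop:vol} gives a volume change $\ll_\xi Q^2\gd X^2\|M\|^4+\|M\|^2Q^{2/3}$, not $O_\xi(\gd(\|M\|^{-2}+X^2)Q^2)$. The $\|M\|^4$ factor is essential and comes from the $\xi$-dilation.

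Second, the spectral error carries \emph{no} $\|M\|$-dependence. In the paper's formulation one unfolds to $\int_{\cR_M}\langle\pi(g)\Psi_2,\Psi_1\rangle\,dg$ and applies the matrix-coefficient bound \eqref{eq:MtrxC}; the norms that appear are those of the fixed bump functions, not of the region. Crucially, because the left bump $\psi_1$ can be taken \emph{spherical} (the region $\cR_M$ is left-$K$-invariant), its $L^2$-norm $\asymp\gd^{-1}$ replaces a Sobolev norm, and one gets $\|\Psi_1\|\cdot\cS\Psi_2\asymp\gd^{-1}\cdot\gd^{-3}=\gd^{-4}$. The spectral error is then $\gd^{-4}Q^{1+2\gT}$, independent of $M$. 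Your claim that ``derivatives of the smoothed indicator pick up polynomial factors in $\|M\|$'' is not the operative mechanism, and your matching condition $\gd^{-a}\|M\|^bQ^{1+2\gT}\asymp\gd X^2Q^2$ would force $b=16$, which nothing in the smoothing supplies.

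With the corrected balance $\gd^{-4}Q^{1+2\gT}\asymp \gd X^2\|M\|^4Q^2$, the optimum $\gd=(X^2\|M\|^4Q^{1-2\gT})^{-1/5}$ yields exactly $Q^{(9+2\gT)/5}X^{8/5}\|M\|^{16/5}$; the remaining terms $Q^2/X^2$ and $\|M\|^2Q^{2/3}$ arise as you say.
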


\pf[Sketch of the proof]
Let $\gd>0$ be small enough that \eqref{eq:delToM} is satisfied.
Let $\psi_1=\psi_{1,\gd}$ be a spherical $\gd$-bump function about the origin in $G$, that is, $\psi_{1}$ is smooth, non-negative,
 $\int_{G}\psi_1=1$, $\psi_1(kgk')=\psi_{1}(g)$ and $\supp\psi_{1}\subset B_{\delta_1}
 $.
Also let $\psi_2=\psi_{2,\gd}$ denote a (non-spherical) $\delta$-bump function supported on $D_\delta$.

For $j=1,2$, let
$$
\Psi_j(g)
:=\sum_{\g\in\G}\psi_j(g\g),
$$
so that $\Psi_1
\in L^2(K\bk G/\G)$ and $\Psi_2\in L^2( G/\G)$.
We can choose the bump functions so that
\be\label{eq:L2norms}
\|\Psi_1\|\asymp
\frac1{\vol(B_{\gd_{1}})^{1/2}}
\asymp
 \frac{1}{\delta},
 \quad
\mbox{ and }
\quad
 \cS\Psi_{2}\asymp
 \frac1\gd\cdot
 \frac1{\vol(D_{\gd})^{1/2}}
 \asymp
  \frac{1}{\delta^{3}}
  .
\ee
Here $\cS$ is a first-order Sobolev norm, defined as follows. Fix a basis $X_{1},X_{2},X_{3}$  for the Lie algebra $\fg=\fs\fl_{2}(\R)$; then $\cS\Psi=\max\limits_{j=1,2,3}\|X_{j}.\Psi\|$.

Given $c>0$ from Lemma \ref{lem:round} with $c'$ small enough, let
\be\label{eq:gd2gd3}
\gd_{2}:=1+c\gd\|M\|^{2}\asymp1,\qquad
\text{and}\qquad
\gd_{3}:=1+c\gd X^{2}\|M\|^{4}\ll X^{2}\|M\|^{2}.
\ee
Let $\cF=\cF_{Q,\xi,X,M}\in L^2(K\bk G/\G\times G/\G)$ be defined by
$$
\cF(g,h):=\sum_{\g\in\G}\bo_{ \cR_{M}(Q\gd_{2},\xi \gd_{3};2X)}(g\g h^{-1}).
$$
We will prove an upper bound on the cardinality of $\G\cap \cR_{M}(Q,\xi;X)$, the lower bound being similar.

Using \eqref{eq:cRmin}, an easy calculation shows that
\be\label{eq:GupBnd}
\#\G\cap \cR_{M}(Q,\xi;X)\
\le\
\#\G\cap \cR_{M}^{-}(Q\gd_{2},\xi\gd_{3};2X)\
\le\
\<\cF,\Psi_1\otimes\Psi_2\>
,
\ee
and that
\be\label{eq:unfold}
\<\cF,\Psi_1\otimes\Psi_2\>
=
\int_{\cR_{M}(Q\gd_{2},\xi\gd_{3};2X)}
\<\pi(g
).\Psi_2,\Psi_1\>
dg
,
\ee
where $\pi$ is the left-regular representation on $G$.

To estimate the last integral, decompose each 
function as
$$\Psi_j=\frac{1}{
V_{\G}}+\Psi_j^\bot,
$$
where $\Psi_j^\bot$ is orthogonal to
constants.
Recall the well-known decay of matrix coefficients
(see, e.g., \cite{Warner1972, CowlingHaagerupHowe1988, Shalom1999}, and in particular, \cite[\S9.1.2]{Venkatesh2010}):
for mean-zero functions $F_{1},F_{2}\in L^{2}_{0}(G/\G)$, we have
\be\label{eq:MtrxC}
\<\pi(g).F_{1},F_{2}\>
\ll
\|g\|^{-1+2\gT}\
\cS F_{1}\cdot
\cS F_{2}
,
\ee
where $\gT$ is a spectral gap for $\G$. We are being a bit crude here with the $F_{j}$ dependence in the error; as we are not trying to optimize exponents, we opt for a cleaner statement than best known. That said, when an $F_{j}$ is $K$-fixed, then its Sobolev norm can be replaced by its $L^{2}$-norm.

Applying \eqref{eq:MtrxC}, we have
$$
\<\pi(g
).\Psi_2,\Psi_1\>
=
\frac1{V_{\G}}
+
O(\|g\|^{-1+2\gT}\|\Psi_{1}\|\cS\Psi_{2})
=
\frac1{V_{\G}}
+
O\left(\|g\|^{-1+2\gT}
\frac1{\gd^{4}}
\right)
,
$$
by \eqref{eq:L2norms}.
Inserting
 this
 into \eqref{eq:unfold}
and
 \eqref{eq:GupBnd} gives
\bea
\nonumber
\#\G\cap \cR_{M}(Q,\xi;X)
&\le&
{\vol(\cR_{M}(Q\gd_{2},\xi\gd_{3};2X))
\over
V_{\G}
}
+
O\left(
\frac1{\gd^{4}}
Q^{1+2\gT}
\right)
\\
\nonumber
&=&
{\vol(\cR_{M}(Q\gd_{2},\xi\gd_{3}))
\over
V_{\G}
}
+
O\left(
{Q^{2}\over X^{2}}
+
\frac1{\gd^{4}}
Q^{1+2\gT}
\right)
,
\\
\label{eq:GcapRMX}
\eea
where 
we crudely estimated $\cR_{M}(Q\gd_{2},\xi\gd_{3};2X)\subset B_{2Q}$.

Using the estimate
%
 in Proposition \ref{prop:vol} for the volumes, with the uniform in $\xi$ error terms given in \eqref{eq:cRMvol1}, we see that
\bea
\nonumber
&&
\hskip-.5in
|\vol(\cR_M(Q\delta_2,\xi\delta_3))
-
 \vol(\cR_M(Q,\xi))|
 \\
 \nonumber
 &
 =
 &
 Q^{2}\gd_{2}^2\int_{\xi}^{\xi\gd_{3}}f_{\gz}(\ell(M))d\gz
 +
 Q^{2}(\gd_{2}^2-1)\int_{0}^{\xi}f_{\gz}(\ell(M))d\gz
 \\
 \nonumber
&&
\quad
+
O_{\xi}(\|M\|^{2}Q^{2/3}
)
+
O(
\|M\|^{2}(Q\gd_{2})^{2/3}(1+1/(\xi\gd_{3}))
+(\xi\gd_{3}/Q)^{2}
)
\\
\nonumber
 &\ll_\xi&
 Q^{2}
(\gd_{3}-1)
 +
 Q^{2}
 (\gd_2-1)
+
\|M\|^{2}Q^{2/3}
+
{X^{2}\|M\|^{2}\over Q^{2}}
\\
\label{eq:volRMX}
&\ll&
 Q^{2}\gd X^{2}\|M\|^{4}
+
\|M\|^{2}Q^{2/3}
,
\eea
where we used \eqref{eq:delToM}, \eqref{eq:gd2gd3}, and \eqref{eq:Xtrunc}.

Combining \eqref{eq:GcapRMX} with \eqref{eq:volRMX}, we choose the  optimal value
$$
\gd
=
{
1
\over
Q^{(1-2\gT)/5}
X^{2/5}\|M\|^{4/5}
}
.
$$
It is easy to see from  \eqref{eq:MbndQ} 
that \eqref{eq:delToM} is satisfied.

We thus obtain
\beann
\#\G\cap \cR_{M}(Q,\xi;X)
&\le&
{\vol(\cR_{M}(Q,\xi))
\over
V_{\G}
}
\\
&&
+
O_{\xi}
\left(
{Q^{2}\over X^{2}}
+
\|M\|^{2}Q^{2/3}
+
Q^{(9+2\gT)/5}
X^{8/5}
\|M\|^{16/5}
\right)
.
\eeann
The lower bound is proved similarly, concluding the proof of
 \eqref{eq:GRMXis}.
\epf

We are finally in position to give a
\pf[Proof of Proposition \ref{prop:cM}]
We easily see that
\beann
\#\G\cap\cR_{M}(Q,\xi)
&=&
\#\G\cap\cR_{M}(Q,\xi;X)
+O\left({Q^{2}\over X^{2}}\right)
.
\eeann
Combined with \eqref{eq:GRMXis}, we choose the optimal value
$$
X=
{Q^{(1-2\gT)/18}
\over
\|M\|^{8/9}
}
.
$$
Then \eqref{eq:Xtrunc} is satisfied by \eqref{eq:MbndQ}.
We can also use \eqref{eq:MbndQ} to bound
$$\|M\|^2Q^{2/3}\leq \|M\|^{16/9}Q^{(17+2\gT)/9},$$
and the claim follows immediately.
\epf

\begin{rem}
For the generalization described in Remark \ref{rmk:Jens} we need to
replace the regions $\cR_M(Q,\xi)$ by
$$\cR^{(\cI)}_N(Q,\xi)=\{g\in \cR_M(Q,\xi)|\theta_g\in \cI\},$$
and similarly $\cR_M(Q,\xi,X)$ by $\cR_M^{(\cI)}(Q,\xi,X)$; when doing this the volumes change by a factor of $|\cI|$.
These new regions are no longer left $K$-invariant, which requires a few small modifications to the proofs. In particular, in the smoothing process, \eqref{eq:Rminus} and \eqref{eq:Rplus}, we need to replace $B_{\delta_1}$ by $D_\delta$, and in Lemma \ref{lem:round} we also need to enlarge the interval $\cI$ on the left hand side by $O(\delta)$. Next, in the proof of Proposition \ref{prop:count} we need both $\delta$-bump functions to be non-spherical, resulting in a slightly worse power saving for the error term. After these modification the rest of the proof follows without a change.
\end{rem}

\newpage

\section{Bounding the Error $\cE$}\label{sec:cE}

Recall the notation in \S\ref{sec:intro}.
In this section, we prove Proposition \ref{prop:cE}, 
estimating the ``error'' term
$
\cE_{Q,T}(\xi)
$.
Recall that this is half the cardinality of the set
$$
\cS=
\left\{
(\gamma,M)\in\G^{2}: \|\gamma\|,\|\gamma M\|\leq Q,\; \|M\|\ge T, \; |\theta_{\gamma}-\theta_{\gamma M}|<\frac{2\xi}{Q^2}
\right\}
.
$$
It will be more convenient to return to $\gamma'=\gamma M$, renaming
\[
\cS=
\left\{
(\gamma,\gamma')\in\G^{2}:
\|\gamma\|,\|\gamma'\|\leq Q,\; |\theta_{\gamma}-\theta_{\gamma'}|<\frac{2\xi}{Q^2},\; \|\gamma^{-1}\gamma'\|\ge T
\right\}
.
\]

We first prove the following
\begin{lem}
For $10< Q_{1}<Q$, and $\gt'\in[-\pi,\pi)$ fixed,
\be\label{eq:Q1bnd}
\#
\left\{
\g\in\G:
Q_{1}\le \|\g\|<2 Q_{1}
,
|\gt_{\g}-\gt'|<{2\xi\over Q^{2}}
\right\}
\ll_{\xi}
1
.
\ee
\end{lem}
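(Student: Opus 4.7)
The plan is to show that every point $\g\cdot i$ counted on the left side of \eqref{eq:Q1bnd} lies inside a single hyperbolic ball of radius $O_{\xi}(1)$, after which the bound follows from a standard volume-packing argument.

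First I would reduce to counting orbit points. Both conditions $Q_{1}\le\|\g\|<2Q_{1}$ and $|\gt_{\g}-\gt'|<2\xi/Q^{2}$ depend only on $\g\cdot i\in\bH$, via \eqref{eq:gNorm} and the geometric description of $\gt_{\g}$ as the angle at $i$ of the geodesic ray from $i$ to $\g\cdot i$. Hence the count on the left of \eqref{eq:Q1bnd} is at most $|\G_{i}|$ times the number of \emph{distinct} orbit points $\g\cdot i$ satisfying both conditions, where $\G_{i}=\G\cap K$ is the (finite) stabilizer of $i$.

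Next I would bound the mutual hyperbolic distance between any two such orbit points $\g\cdot i$ and $\g'\cdot i$ via the hyperbolic law of cosines applied to the triangle with vertices $i,\g\cdot i,\g'\cdot i$. Writing $d_{1}=d(i,\g\cdot i)$, $d_{2}=d(i,\g'\cdot i)$, and $\phi$ for the angle at $i$, this reads
$$
\cosh d(\g\cdot i,\g'\cdot i)=\cosh(d_{1}-d_{2})+\sinh d_{1}\sinh d_{2}\,(1-\cos\phi).
$$
From $Q_{1}\le\|\g\|,\|\g'\|<2Q_{1}$ together with \eqref{eq:gNorm} we get $|d_{1}-d_{2}|\ll 1$ and $\sinh d_{1}\sinh d_{2}\ll Q_{1}^{4}$, while the triangle inequality on $\R/(2\pi\Z)$ gives $|\phi|<4\xi/Q^{2}$ and hence $(1-\cos\phi)\ll\xi^{2}/Q^{4}$. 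Multiplying and using the hypothesis $Q_{1}<Q$,
$$
\sinh d_{1}\sinh d_{2}\,(1-\cos\phi)\ll\xi^{2}(Q_{1}/Q)^{4}\le\xi^{2},
$$
so $d(\g\cdot i,\g'\cdot i)\ll_{\xi}1$.

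The final step is the packing bound. Since $\G$ acts discretely on $\bH$ and $K$ is compact, $\eta:=\min\{d(i,\g\cdot i):\g\in\G,\,\g\notin K\}$ is positive and depends only on $\G$; by $\G$-invariance of the metric, $\eta$ is also a uniform lower bound on the distance between any two distinct points of the orbit $\G\cdot i$. The number of orbit points in a hyperbolic ball of radius $R=O_{\xi}(1)$ is therefore at most $\vol(B_{R+\eta/2})/\vol(B_{\eta/2})=O_{\xi}(1)$, and multiplying by $|\G_{i}|$ yields \eqref{eq:Q1bnd}. There is no real obstacle here; the essential quantitative input is simply the assumption $Q_{1}<Q$, which keeps the ratio $(Q_{1}/Q)^{4}$ bounded.
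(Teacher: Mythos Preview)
Your argument is correct. The law-of-cosines computation is clean, and the packing step is legitimate since the mutual-distance bound shows the whole set has diameter $O_{\xi}(1)$.

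The paper proceeds differently: it thickens each $\g$ on the right by a $K$-bi-invariant ball $B_{\gd_{1}}\subset G$ (with $\gd\asymp1$ chosen so that the translates are disjoint), then invokes the perturbation estimates of Lemma~\ref{lem:pert} to see that the union of the thickened sets is contained in the region $\{g\in G:\|g\|\le 3Q_{1},\ |\gt_{g}-\gt'|\le (c\gd+2\xi)/Q_{1}^{2}\}$, whose Haar volume is $O_{\xi}(1)$. So the paper works upstairs in $G$ and reuses the analytic machinery already set up in \S\ref{sec:prelim}, whereas you work downstairs in $\bH$ with the hyperbolic law of cosines and a bare-hands separation bound. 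Your route is more self-contained and arguably more transparent for this particular lemma; the paper's route has the advantage of recycling existing estimates rather than introducing a new geometric identity.
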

Recall that the implied constant above, as throughout, may depend on $\G$ without further notice.
\pf
Fix  $\delta=\gd(\G)\asymp1$ sufficiently small
that
$$
(KA_{2\gd}K)\cap\G
=
\{1\}.
$$
Letting $\gd_{1}$ be related to $\gd$ by \eqref{eq:gdTogd1}, recall the region $B_{\gd_{1}}\subset G$ defined in \eqref{eq:Ddel}.

Writing $\cL$ for
the left hand side of \eqref{eq:Q1bnd}, we thicken each $\g$ on the right by $B_{\gd_{1}}$, giving
\beann
\cL
&=&
\frac1{\vol( B_{\gd_{1}})}
\sum_{\g\in\G\atop
Q_{1}\le \|\g\|<2 Q_{1},
|\gt_{\g}-\gt'|<\frac{2\xi}{Q^{2}}}
{\vol(\g B_{\gd_{1}})}
\\
&\le &
\frac1{\vol( B_{\gd_{1}})}
\vol\left\{g\in G:
\|g\|\le3 Q_{1},
|\gt_{g}-\gt'|\le \frac{c\gd+2\xi}{Q_{1}^{2}}
\right\}
,
\eeann
where we used \eqref{eq:dnormR} and \eqref{eq:dgtR}.
It is elementary to compute that this volume is $\ll_{\xi} 1$, giving the claim (since $\gd\asymp1$).
\epf

We can now give a
\pf[Proof of Proposition \ref{prop:cE}]\

Let $(\g,\g')\in\cS$ and
write $\g=k_{\theta} a_t k$ and $\g'=k_{\theta'}a_{t'} k'$, so that, as in \eqref{eq:R1}, we have
\beann
\|\g^{-1}\g'\|^2
&=&
\|a_{-t}k_{\theta'-\theta}a_{t'}\|^2
\\
&=&
2(\cosh(t-t')\cos^2\left({\theta-\theta'\over2}\right)+\cosh(t+t')\sin^2\left({\theta-\theta'\over2}\right)
.
\eeann
Since $(\g,\g')\in\cS$, we have
$|\theta-\theta'|\leq \frac{2\xi}{Q^2}$
and
$2\cosh t,2\cosh t'\leq Q^{2}$,
whence
$$
\|\gamma^{-1}\gamma'\|^{2}
=
2\cosh(t-t')+O_{\xi}(1
)
.
$$
Then for $T\gg_{\xi}1$, the condition
$\|\gamma^{-1}\gamma'\|\geq T$
implies
$$
2\cosh(t-t')>\foh T^{2}.
$$
Assuming $t'\le t$, we can relax this even further to $t'<2\log (Q/T)+c$ and $t<2\log Q$.

Hence $\cE_{Q,T}(\xi)=\foh\#\cS\ll \#\cS'$, where
\beann
\cS'
&:=&
\left\{
(\g,\g')\in\G^{2}:
\|\gamma\|\leq Q,\;
\|\gamma'\|\ll\frac QT,\;
|\theta_{\gamma}-\theta_{\gamma'}|<\frac{2\xi}{Q^2}
\right\}
.
\eeann
We sum $\g'$ on the outside, and break the $\g$ sum dyadically:
\beann
\#\cS'
&\ll&
\sum_{\g'\in\G\atop\|\g'\|\ll \frac QT}
\left(
1
+
\sum_{10<Q_{1}<Q\atop \text{dyadic}}
\#\left\{
\g\in\G:
\|\gamma\|\asymp Q_{1},\;
|\theta_{\gamma}-\theta_{\gamma'}|<\frac{2\xi}{Q^2}
\right\}
\right)
\\
&\ll_{\xi}&
\log Q
\left(
\frac QT\right)^{2}
,
\eeann
where we used \eqref{eq:Q1bnd}. This completes the proof.
\epf

\newpage

\section{Proof of  Theorem \ref{thm:1}}\label{sec:thm1}

The purpose of this section is to
combine the ingredients in \S\S\ref{sec:vol}--\ref{sec:cE} to prove Theorem \ref{thm:1}.
Recall that
$$
\cN_{Q}(\tfrac{\xi}{ V_\G})=\frac{
1
}{2}\sum_{M\in\G
\atop M\not\in K}\#\G\cap\cR_{M}(Q,\xi).
$$
For a parameter $T$ to be chosen later, we use Proposition \ref{prop:cE} to write
$$
\cN_{Q}(\tfrac{\xi}{V_\G})=\frac{
1
}{2}\sum_{M\in\G
\atop M\not\in K,\|M\|<T}\#\G\cap\cR_{M}(Q,\xi) + O_{\xi}\left(Q^{2}{\log Q\over T^{2}}\right).
$$
Applying Proposition \ref{prop:cM} gives
$$
\cN_{Q}(\tfrac{\xi}{V_\G})=\frac{
1}{2}\sum_{M\in\G
\atop M\not\in K,\|M\|<T}{\vol(\cR_{M}(Q,\xi))\over V_{\G}} + O_{\xi}\left(
T^{34/9}Q^{(17+2\gT)/9}
+
Q^{2}{\log Q\over T^{2}}\right).
$$
Next apply Proposition \ref{prop:vol} to obtain
\beann
\cN_{Q}(\tfrac{\xi}{V_\G})&=&
{Q^{2} 
\over 2V_{\G}}
\sum_{M\in\G
\atop M\not\in K,\|M\|<T}
\int_{0}^{\xi}f_{\gz}(\ell(M))d\gz
 + O_{\xi}\left(
T^{34/9}Q^{(17+2\gT)/9}
+
Q^{2}{\log Q\over T^{2}}\right)\\
&=&{Q^{2} \over 2V_{\G}}
\sum_{M\in\G\atop \|M\|<T}
\int_{0}^{\xi}f_{\gz}(\ell(M))d\gz
 + O_{\xi}\left(
T^{34/9}Q^{(17+2\gT)/9}
+
Q^{2}{\log Q\over T^{2}}\right).
\eeann
Here we have dropped the error term $O(T^{4}Q^{2/3})$ from Proposition \ref{prop:vol}, which will be of lower order.
Finally, use \eqref{eq:fxiDecay} to estimate
$$
\sum_{M\in\G\atop \|M\|<T}
\int_{0}^{\xi}f_{\gz}(\ell(M))d\gz
=
\sum_{M\in\G}
\int_{0}^{\xi}f_{\gz}(\ell(M))d\gz
+
O_{\xi}\left(T^{2}\frac1{T^{4}}\right)
,
$$
whence
$$
\cN_{Q}(\tfrac{\xi}{V_\G})
=
{Q^{2}\over 2 V_{\G}}
\sum_{M\in\G}
\int_{0}^{\xi}f_{\gz}(\ell(M))d\gz
 + O_{\xi}\left(
T^{34/9}Q^{(17+2\gT)/9}
+
Q^{2}{\log Q\over T^{2}}\right).
$$
Setting optimally
$$
T=
Q^{(1-2\gT)/52}
,
$$
we obtain
$$
\cN_{Q}(\tfrac{\xi}{V_\G})
=
{Q^{2}\over 2 V_{\G}}
\sum_{M\in\G}
\int_{0}^{\xi}f_{\gz}(\ell(M))d\gz
 + O_{\xi}\left(
Q^{2-\frac{(1-2\gT)}{26}+\vep}\right)
,
$$
thereby confirming \eqref{eq:cNQrate} with the rate claimed in \eqref{eq:cNQrateIs}. The rest of Theorem \ref{thm:1} follows immediately.

\newpage

\section{Proof of  Theorem \ref{thm:2}}\label{sec:thm2}
The purpose of this section is to prove that the pair correlation density $g_{2}(\xi)$ approaches $1$ in the limit as $\xi\to\infty$.
To do this we observe that $g_2(\tfrac{\xi}{V_\G})$ is a multiple of an automorphic kernel
$$g_2(\tfrac{\xi}{V_\G})= \frac{V_\G}{2\pi} K_\xi(1,1),$$
where
\begin{equation}\label{eq:cFxi}
K_\xi(g,h)=\sum_{M\in \G}f_\xi(\ell(gMh^{-1})).
\end{equation}
As the authors of \cite{BocaPopaZaharescu2013}
observed,
had the function $f_\xi$ been smooth, the Selberg (pre) trace formula would imply that
$$K_\xi(1,1)\sim\frac{1}{V_\G}\int_G f_\xi(\ell(g))dg,$$
and the result would follow by \eqref{eq:fxiInt}. The only difficulty is that the function $f_{\xi}$ in \eqref{eq:fXiDef} is not differentiable at two points, so one cannot apply the trace formula
 directly. Nevertheless, using a standard smoothing and unsmoothing argument (similar to the proof of Proposition \ref{prop:count}), we show the following

\begin{prop}\label{prop:smoothing}
As $\xi\to\infty$ we have
$$
K_\xi(1,1)=\frac{1}{V_\G}\int_G f_\xi(\ell(g))dg+O\left(\frac{1}{\xi^{(1-2\gT)/3}}\right).
$$
\end{prop}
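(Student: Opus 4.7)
The plan is a standard smoothing-and-unsmoothing argument, in the spirit of Proposition \ref{prop:count}, taking advantage of the $K$-bi-invariance of the function $g\mapsto f_\xi(\ell(g))$ on $G$. First, for a parameter $\delta\in(0,1)$ to be optimized, I will construct smooth non-negative functions $f_\xi^\pm$ on $[0,\infty)$ satisfying the strong sandwich property
$$f_\xi^-(\ell')\;\leq\;f_\xi(\ell)\;\leq\;f_\xi^+(\ell')\qquad\text{whenever }|\ell-\ell'|\leq 2\delta,$$
obtained by shifting the argument of $f_\xi$ by $\pm 2\delta$, adjusting by the pointwise deviation permitted by Lemma \ref{lem:fxivar}, and then mollifying at scale $\delta$. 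A direct integration (splitting at the singular points $\ell_1(\xi),\ell_2(\xi)$ of $f_\xi'$ and using $\sinh(\ell_1(\xi))\asymp\xi$, $\cosh(\ell_2(\xi))\asymp\xi^2$) gives $\int_G(f_\xi^+-f_\xi^-)(\ell(g))\,dg\ll\delta$, the dominant contribution coming from the middle region $[\ell_1(\xi)+1,\ell_2(\xi)+1]$.

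Next, let $\phi_\delta$ be a smooth non-negative spherical (i.e., $K$-bi-invariant) bump on $G$, supported in $B_{\delta_1}=KA_\delta K$ with $\int_G\phi_\delta=1$, and let $\Phi_\delta(g)=\sum_{\gamma\in\G}\phi_\delta(g\gamma)$ be its periodization; since $\vol(B_{\delta_1})\asymp\delta^2$, one has $\|\Phi_\delta\|_2\asymp\delta^{-1}$. The triangle inequality yields $|\ell(gMh^{-1})-\ell(M)|\leq 2\delta$ for $(g,h)\in B_{\delta_1}\times B_{\delta_1}$, which combined with the strong sandwich from the previous step gives
$$K_\xi^-(g,h)\;\leq\;K_\xi(1,1)\;\leq\;K_\xi^+(g,h)\qquad\text{on }B_{\delta_1}\times B_{\delta_1},$$
where $K_\xi^\pm(g,h):=\sum_M f_\xi^\pm(\ell(gMh^{-1}))$. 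Since $\Phi_\delta\otimes\Phi_\delta\geq 0$ has unit total mass, we obtain
$$\langle K_\xi^-,\Phi_\delta\otimes\Phi_\delta\rangle\;\leq\;K_\xi(1,1)\;\leq\;\langle K_\xi^+,\Phi_\delta\otimes\Phi_\delta\rangle.$$

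Proceeding as in the proof of Proposition \ref{prop:count}, a standard unfolding gives
$$\langle K_\xi^\pm,\Phi_\delta\otimes\Phi_\delta\rangle\;=\;\int_G f_\xi^\pm(\ell(g))\,\langle\pi(g)\Phi_\delta,\Phi_\delta\rangle\,dg.$$
Writing $\Phi_\delta=V_\G^{-1}+\Phi_\delta^\perp$ and applying the decay of matrix coefficients \eqref{eq:MtrxC} (with the $L^2$-norm replacing the Sobolev norm, since $\Phi_\delta$ is $K$-fixed on both sides), together with Lemma \ref{lem:fxiInt} applied with $\alpha=1-2\gT\in(0,1)$, yields
$$\langle K_\xi^\pm,\Phi_\delta\otimes\Phi_\delta\rangle\;=\;\frac{1}{V_\G}\int_G f_\xi^\pm(\ell(g))\,dg\;+\;O\!\left(\frac{1}{\delta^2\,\xi^{1-2\gT}}\right).$$
Combining with the smoothing estimate for $\int_G(f_\xi^+-f_\xi^-)(\ell(g))\,dg\ll\delta$ gives
$$K_\xi(1,1)\;=\;\frac{1}{V_\G}\int_G f_\xi(\ell(g))\,dg\;+\;O\!\left(\delta+\frac{1}{\delta^2\,\xi^{1-2\gT}}\right),$$
and the optimal choice $\delta=\xi^{-(1-2\gT)/3}$ balances the two errors to produce the claimed bound.

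The main technical obstacle is the derivative singularity of $f_\xi$ at $\ell_1(\xi)$, where Lemma \ref{lem:fxivar} gives only the weaker pointwise control $\sqrt{\delta}/\xi^2$. Fortunately this singular region has hyperbolic volume only $\asymp\delta\sinh(\ell_1(\xi))\asymp\delta\xi$ (a width-$\delta$ annulus at radius $\ell_1(\xi)$), so it contributes only $\sqrt{\delta}\cdot\delta\xi/\xi^2=\delta^{3/2}/\xi$ to the $L^1$ smoothing error, which is dominated by the $O(\delta)$ contribution from the ``middle'' region. This is what keeps the overall smoothing error linear in $\delta$ and permits the balancing to yield the exponent $(1-2\gT)/3$.
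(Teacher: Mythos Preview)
Your approach is correct and follows essentially the same route as the paper: a spherical bump $\Phi_\delta$ on $G/\G$ with $\|\Phi_\delta\|_2\asymp\delta^{-1}$, unfolding the kernel, decay of matrix coefficients \eqref{eq:MtrxC} combined with \eqref{eq:fxiBound}, and optimization at $\delta=\xi^{-(1-2\gT)/3}$. The one notable difference is that you introduce smooth sandwiching functions $f_\xi^\pm$, whereas the paper works directly with $f_\xi$ itself. This extra smoothing is in fact unnecessary: the bound \eqref{eq:MtrxC} is a pointwise estimate in $g$ and requires no regularity of the kernel, so one may unfold $\langle K_\xi,\Psi\otimes\Psi\rangle=\int_G f_\xi(\ell(g))\langle\pi(g)\Psi,\Psi\rangle\,dg$ and apply \eqref{eq:MtrxC} and Lemma \ref{lem:fxiInt} directly. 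The paper then simply bounds $|K_\xi(g,h)-K_\xi(1,1)|\ll\sqrt{\delta}/\xi+\delta$ for $g,h\in B_{\delta_1}$ by summing the estimates of Lemma \ref{lem:fxivar} over the lattice (using $\#\{M\in\G:\|M\|<T\}\ll T^2$), which replaces your $L^1$ comparison of $f_\xi^\pm$ with $f_\xi$. Your version trades a lattice-point count for the construction of $f_\xi^\pm$; both lead to the same smoothing error $O(\delta)$ and the same final exponent.

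One minor inaccuracy in your last paragraph: Lemma \ref{lem:fxivar} gives the $\sqrt{\delta}/\xi^2$ bound on the entire interval $[\ell_1(\xi)-\delta,\ell_1(\xi)+1]$, which has width $\asymp 1$ (not $\asymp\delta$) and hence hyperbolic volume $\asymp\xi$, not $\asymp\delta\xi$. The resulting contribution to $\int_G(f_\xi^+-f_\xi^-)(\ell(g))\,dg$ is therefore $\sqrt{\delta}/\xi$ rather than $\delta^{3/2}/\xi$. This is still dominated by $\delta$ in the relevant range (since $\delta\geq\xi^{-2}$), so your conclusion is unaffected.
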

\begin{proof}
As in the proof of Proposition \ref{prop:count},
let $\delta>0$ be a small parameter to be chosen later, and let $\delta_1$ be related to $\gd$ by \eqref{eq:gdTogd1}.
Let $\psi=\psi_\delta$ denote a spherical bump function supported on $B_{\delta_1}$, and
let
$$\Psi(g)=\sum_{\g\in \G}\psi(g\g),$$
where, as in \eqref{eq:L2norms}, we choose the bump function so that
$$\|\Psi\|\asymp \frac{1}{\sqrt{\vol(B_{\delta_1})}}\asymp \frac{1}{\delta}.$$

Note that for $g,h\in B_{\delta_1}$ we have $\ell(gMh^{-1})=\ell(M)+O(\delta)$ and using Lemma \ref{lem:fxivar} we see that 
\beann
|K_\xi(g,h)-K_\xi(1,1)|&\leq & \sum_{M\in \G} |f_\xi(\ell(M))-f_{\xi}(\ell(gMh^{-1}))|\\
&\ll& \sum_{\|M\|\ll \sqrt{\xi}}\frac{\sqrt{\delta}}{\xi^2}+\sum_{\|M\|\ll \xi}\frac{\delta}{\xi^2}+\sum_{\|M\|\gg \xi}{\gd\over \|M\|^{4}}\\
&\ll&
\frac{\sqrt{\delta}}{\xi}+\delta+{\gd\over\xi^{2}}
\ll
\frac{\sqrt{\delta}}{\xi}+\delta
.
\eeann
Since $\psi$ is supported on $B_{\delta_1}$ and has integral one, 
we have
that
\begin{equation}\label{eq:K1TKPsi}
K_\xi(1,1)=\<K_\xi,\Psi\otimes\Psi\>+
O\left(\frac{\sqrt{\delta}}{\xi}+\gd
\right).
\end{equation}

On the other hand, unfolding as in \eqref{eq:unfold}, we obtain
\begin{equation*}
\<K_\xi,\Psi\otimes\Psi\>=\int_G f_\xi(\ell(g))\<\pi(g)\Psi,\Psi\>dg.
\end{equation*}
Decomposing $\Psi=\frac{1}{
V_\G}+\Psi^{\bot}$ together with 
\eqref{eq:MtrxC}, namely that
$$\<\pi(g)\Psi^\bot,\Psi^\bot\>\ll \|\Psi\|^2\|g\|^{-1+2\gT}\ll \delta^{-2} \|g\|^{-1+2\gT},$$
gives
\begin{eqnarray}\nonumber
\<K_\xi,\Psi\otimes\Psi\>&=&\frac1{V_\G}{\int_G f_\xi(\ell(g))dg}+O\left(\delta^{-2}\int_G f_\xi(\ell(g))\|g\|^{-1+2\gT}dg\right)\\
\label{eq:KPsiEst} &=& \frac1{V_\G}{\int_G f_\xi(\ell(g))dg}+O\left(\frac{1}{\delta^{2}\xi^{1-2\gT}}\right)
\end{eqnarray}
where we used \eqref{eq:fxiBound}.

Finally, combining \eqref{eq:K1TKPsi} and \eqref{eq:KPsiEst}, with an optimal choice of
$$\delta={1\over\xi^{(1-2\gT)/3}},$$
gives
$$K_\xi(1,1)=\frac1{V_\G}{\int_G f_\xi(\ell(g))dg}+O(\xi^{(2\gT-1)/3}),$$
as claimed.
\end{proof}
Theorem \ref{thm:2} now follows immediately from Proposition \ref{prop:smoothing} together with \eqref{eq:fxiInt}.


\newpage



\end{document}